\newtheorem{thm}{Theorem}[section]
\newtheorem{prop}[thm]{Proposition}
\newtheorem{cor}[thm]{Corollary}
\newtheorem{remarque}[thm]{Remark}
\theoremstyle{defi}
\newtheorem{defi}[thm]{Definition}
\newtheorem{notation}[thm]{Notation}
\newtheorem{condition}[thm]{Condition}
\title[Maximal estimates for the Fokker-Planck-Magnetic operator ] %Use the shortened version of the full title
      {Maximal estimates for the Fokker-Planck operator with magnetic field}
\author[Zeinab karaki]{}
\subjclass{Primary: 35H10, 47A67; Secondary: 82C40, 82D40.}
 \keywords{Fokker-Planck operator; magnetic field; Lie algebra; irreductible representation; maximal hypoellipticity\,.}
 \email{zeinab.karaki@univ-nantes.fr}
\begin{document}
\maketitle

% Enter the first author's name and address:
\centerline{\scshape Zeinab Karaki}
\medskip
{\footnotesize
% please put the address of the first author
 \centerline{Universit\'e de Nantes}
   \centerline{Laboratoire de Mathematiques Jean Leray}
   \centerline{ 2, rue de la Houssini\`ere}
    \centerline{BP 92208 F-44322 Nantes Cedex 3, France}
} % Do not forget to end the {\footnotesize by the sign }

\medskip

\bigskip

% The name of the associate editor will be entered by an editorial staff
% "Communicated by the associate editor name" is not needed for special issue.
 %\centerline{(Communicated by the associate editor name)}

%The abstract of your paper
 \begin{abstract}
We consider the Fokker-Planck operator with a strong external magnetic field. We show a maximal type estimate on this operator using a nilpotent approach on vector field polynomial operators and including the notion of representation of a Lie algebra. This estimate makes it possible to give an optimal characterization of the domain of the closure of the considered operator. 
\end{abstract}
%\begin{keywords}
%return to equilibrium, hypoellipticity; semigroup; hypocoercivity; hypodissipativity; Fokker-Planck equation; magnetic field; Lorentz force;
%enlargement space.
%\end{keywords}

%\tableofcontents
\section{Introduction and main results}
\subsection{Introduction}
The Fokker-Planck equation was introduced by Fokker \cite{fokker1914ad}  and Planck
\cite{Planck-FPE-1917}, to describe the evolution of the density of particles under Brownian motion. In recent years, global hypoelliptic estimates have experienced a rebirth
through applications to the kinetic theory of gases. In
this direction many authors have shown maximal estimates
to deduce the compactness of the resolvent of the Fokker-Planck operator and
resolvent estimates to address the issue of return to
the equilibrium. F. H\'erau and F. Nier in \cite{herau2004isotropic} have highlighted the
links between the Fokker-Planck operator with a confining potential and
the associated  Witten Laplacian. Then, in the book of B. Helffer
and F. Nier \cite{nier2005hypoelliptic}, this work has been extended and explained in a general way,
and we refer more specifically to Chapter 9 for a demonstration of
the maximal estimate.

In this article, we continue the study of the model case of the
Fokker-Planck operator with an external magnetic field $B_e$, started in 
\cite{ZK}, and we establish a maximal-type estimate for this model, giving a
characterization of the domain of its closed extension. 
\subsection{Statement of the result}
For $d=2$ or $3$, we consider the Fokker-Planck operator $K$ with an external magnetic field   $B_e$ defined on 
$\mathbb T^d:=\mathbb R^d/ \mathbb Z^d$ with values in $\mathbb{R}^{d(d-1)/2}$ such that
\begin{align}\label{def_K}
K=v\cdot \nabla_x -(v\wedge B_e )\cdot \nabla_v -\Delta_v + v^2/4 -d/2, 
\end{align}

where $v\in\mathbb{R}^{d}$ represents the velocity, $x \in \mathbb{T}^d$ represents the space variable and $t> 0$ is the time.
 In the previous definition of our
operator, we use $(v \wedge B_e ) \cdot \nabla_v$ to mean
\begin{equation*}
(v\wedge B_e)\cdot \nabla_{v}=\begin{cases}
b(x)\,(v_1 \partial_{v_{2}} -v_2 \partial_{v_{1}}) \quad \quad &\text{ if } d=2 \\~\\
b_1(x)(v_2 \partial_{v_{3}} -v_3 \partial_{v_{2}})+b_2(x) (v_3 \partial_{v_{1}} -v_1 \partial_{v_{3}})\\
+ b_3(x)(v_1 \partial_{v_{2}} -v_2 \partial_{v_{1}})  \quad \qquad
&\text{ if } d=3.
\end{cases}
\end{equation*}
 The operator $K$ is considered as an unbounded operator on the Hilbert space $H=L^2 (\mathbb{T}^{d}\times \mathbb{R}^{d})$ whose domain is the Schwartz space 
  $D(K)=\mathcal{S}(\mathbb T^d\times \mathbb{R}^d)$.  We denote by
\begin{enumerate}
\item[$\bullet$] $K_{\min}$ the minimal extension of $K$  where $D(K_{\min})$ is the closure of $D(K)$ with respect to the graph norm on $H\times H$.
 \item[$ \bullet $] $K_{\max}$ is the maximal extension of $K$ whose domain $D(K_{\max})$ is given by
$$ D(K_{\max}) =\{u\in L^2 (\mathbb{T}^{d}\times \mathbb{R}^{d}) \,/\, Ku\in L^2 (\mathbb{T}^{d}\times \mathbb{R}^{d})\}.$$
\end{enumerate}
From now on, we use the notation $\bf{K}$ for the operator $K_{\min}$.

 The existence of a strongly
continuous semi-group associated to the operator $\bf K$ is shown in  \cite{ZK} when the magnetic field is regular. We improve this result by considering a much lower regularity. In order to obtain the maximal accretivity, we are led to substitute
the hypoellipticity argument by a regularity argument for the operators with coefficients in $ L^\infty $, which will be combined with more classical results of Rothschild-Stein in \cite{rothschild1979criterion} for H\"ormander operators of type-2 (see \cite{hormander1967hypoelliptic} for more details of this subject).
\begin{thm}\label{prop 2}
If $B_{e}\in L^{\infty}(\mathbb{T}^{d},\mathbb{R}^{d(d-1)/2})$ , then $\bf K$  is maximally accretive.
\end{thm}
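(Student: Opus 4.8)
Set $a_j:=\partial_{v_j}+v_j/2$, so that $-\Delta_v+v^2/4-d/2=\sum_{j=1}^d a_j^{*}a_j$, and write $\mathbb X_0:=v\cdot\nabla_x-(v\wedge B_e)\cdot\nabla_v$ for the transport part, so $K=\mathbb X_0+\sum_j a_j^{*}a_j$. The field $v\cdot\nabla_x$ is divergence free, and $\operatorname{div}_v(v\wedge B_e)=0$ pointwise even for $B_e\in L^{\infty}$, so $\mathbb X_0$ is formally skew-adjoint on $\mathcal S(\mathbb T^d\times\mathbb R^d)$ (integrating by parts in $v$ moves $\nabla_v$ off the Schwartz factor and never differentiates $B_e$). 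Hence $\operatorname{Re}\langle K\varphi,\varphi\rangle=\sum_j\|a_j\varphi\|^2\ge 0$ for $\varphi\in\mathcal S$, so $\mathbf K=K_{\min}$ is accretive, and it is closed by construction. A closed, densely defined accretive operator is maximally accretive as soon as its adjoint is accretive; since $(K_{\min})^{*}=(K^{*})_{\max}$ and $K^{*}=-v\cdot\nabla_x+(v\wedge B_e)\cdot\nabla_v+\sum_j a_j^{*}a_j$ has exactly the structure of $K$ (a first-order, $v$-linear, $x$-$L^{\infty}$, $v$-divergence-free transport part plus $\sum a_j^{*}a_j$), it is enough to prove the \emph{regularity statement} $D(K_{\max})=D(K_{\min})$ for every operator of this class: granting it for $K^{*}$ gives $(K_{\min})^{*}=(K^{*})_{\max}=(K^{*})_{\min}$, which is accretive by the identity above.

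For the regularity statement I would regularise in $x$. Let $B_\varepsilon:=\rho_\varepsilon\ast_x B_e\in C^{\infty}(\mathbb T^d)$, with $\|B_\varepsilon\|_{L^{\infty}}\le\|B_e\|_{L^{\infty}}$ and $B_\varepsilon\to B_e$ in $L^2$ and a.e., and let $K_\varepsilon$ be the corresponding operator. For fixed $\varepsilon$ the vector fields $a_1,\dots,a_d,\mathbb X_0^{\varepsilon}$ are smooth and satisfy H\"ormander's condition at step $2$ — $[\partial_{v_j},\mathbb X_0^{\varepsilon}]=\partial_{x_j}+(\text{bounded})\cdot\nabla_v$ recovers the missing $x$-directions — so $K_\varepsilon$ is a H\"ormander operator of type $2$; by Rothschild--Stein \cite{rothschild1979criterion} it is maximally hypoelliptic, whence $D(K_{\varepsilon,\max})=D(K_{\varepsilon,\min})$ and a subelliptic estimate bounding $\nabla_v^{2}u$, $v^{2}u$ and $|D_x|^{2/3}u$ by $\|K_\varepsilon u\|+\|u\|$. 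The crucial observation is that this estimate can be arranged so that $B_\varepsilon$ enters only through the pointwise bound $\|B_\varepsilon\|_{L^{\infty}}\le\|B_e\|_{L^{\infty}}$, not through its ($\varepsilon$-unbounded) $x$-derivatives: the magnetic term is skew-symmetric and disappears from the energy identity $\operatorname{Re}\langle K_\varepsilon u,u\rangle=\sum_j\|a_ju\|^2$, while the second-order gain comes from commuting the ($x$-independent) fields $a_j$ through $K_\varepsilon$, the only appearances of $B_\varepsilon$ being as bounded multipliers in front of $v$-derivatives. This gives an a priori bound
\[
\sum_j\|a_j u\|^2+\|\nabla_v^{2}u\|^2+\|v^{2}u\|^2+\||D_x|^{2/3}u\|^2\le C\bigl(\|K_\varepsilon u\|\,\|u\|+\|u\|^2\bigr),\qquad u\in\mathcal S,
\]
with $C=C(d,\|B_e\|_{L^{\infty}})$ independent of $\varepsilon$.

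Given $u\in D(K_{\max})$, I would mollify it in $x$, $u_\delta:=\rho_\delta\ast_x u$; this is smooth in $x$ and satisfies $K u_\delta=\rho_\delta\ast_x(Ku)+[(v\wedge B_e)\cdot\nabla_v,\rho_\delta\ast_x]u$. A routine cut-off and mollification in $v$ lets us apply the uniform a priori estimate to $u_\delta$; letting $\delta\to 0$ — using $\rho_\delta\ast_x(Ku)\to Ku$ in $L^2$ and a Friedrichs-type commutator lemma for the last term, whose coefficient $v\wedge B_e$ is merely bounded and multiplies an a priori uncontrolled $\nabla_v$ — yields $a_j u\in L^2$ for every $j$ and $u_\delta\to u$ in the graph norm of $K$. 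Hence $u\in D(K_{\min})$, so $D(K_{\max})=D(K_{\min})$, and with the reduction above $\mathbf K$ is maximally accretive.

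The main obstacle is the second paragraph together with the limiting procedure of the third: obtaining a Rothschild--Stein-type gain that is \emph{uniform} as $B_\varepsilon\to B_e$ (i.e. blind to the spatial derivatives of the regularised field), and correspondingly controlling $[(v\wedge B_e)\cdot\nabla_v,\rho_\delta\ast_x]u$ in $L^2$ although $(v\wedge B_e)$ is only $L^{\infty}$ and sits in front of the derivative $\nabla_v u$ not yet known to lie in $L^2$. This is exactly where the hypoellipticity argument valid for smooth magnetic fields must be replaced by a regularity argument for operators with $L^{\infty}$ coefficients, localised so that only the sup-norm of $B_e$ is used.
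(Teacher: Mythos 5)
Your reduction is logically sound (a closed densely defined accretive operator with accretive adjoint is m-accretive, and $(K_{\min})^{*}=(K^{*})_{\max}$), but it merely transfers the whole problem to the identity $(K^{*})_{\max}=(K^{*})_{\min}$, which is essential maximal accretivity for an operator of the same class, and the two steps you leave open there are exactly the theorem. First, the asserted $\varepsilon$-uniform bound on $\Vert\nabla_v^2u\Vert$, $\Vert v^2u\Vert$, $\Vert\,|D_x|^{2/3}u\Vert$ with a constant depending only on $\Vert B_e\Vert_{L^\infty}$ is not what Rothschild--Stein \cite{rothschild1979criterion} gives: their maximal hypoellipticity is local and qualitative, and every quantitative version obtained by freezing the coefficient (this is how the paper proves its maximal estimate, Theorem \ref{hypoelli0}) pays an error of size $\varepsilon\,\Vert\nabla_xB_\varepsilon\Vert_{L^\infty}\Vert u\Vert_{\tilde B^2}$, i.e.\ it genuinely uses a modulus of continuity of the field, which blows up as $\varepsilon\to0$ when $B_e$ is merely bounded. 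In effect you are claiming an $L^\infty$-only version of Theorem \ref{hypoelli0}, stronger than what the paper proves even for Lipschitz $B_e$, and it does not follow from the energy identity plus commutation with the $a_j$: testing against $(-\Delta_v+v^2/4)u$ produces the cross term $\sum_j\langle\partial_{x_j}u,\partial_{v_j}u\rangle$, which is not controlled by any quantity in your identity. Second, the limiting commutator $[(v\wedge B_e)\cdot\nabla_v,\rho_\delta\ast_x]u=[(v\wedge B_e)\cdot\,,\rho_\delta\ast_x](\nabla_vu)$ has a bounded, possibly nowhere continuous coefficient acting on $\nabla_vu$, which is not yet known to be in $L^2$; Friedrichs-type commutator lemmas require Lipschitz (or $W^{1,1}$/BV) coefficients or $\nabla_vu\in L^2$, so this step is unjustified. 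You flag both points yourself, but they are not routine details to be filled in along the lines you sketch.

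The paper avoids both obstacles by aiming at much less. Following \cite[Proposition 5.5]{nier2005hypoelliptic}, maximality is reduced to showing that any $u\in L^2$ with $(K^{*}+\tfrac d2+1)u=0$ in $\mathcal D'$ vanishes, so only local, qualitative regularity of that particular distributional solution is needed, not a uniform global subelliptic estimate for a family of mollified operators. The key device (Proposition \ref{lem1}) exploits that the magnetic term is in divergence form, $\partial_{v_j}\bigl((v\wedge B_e)_j\bigr)=0$: in the energy identity for the regularized solution $\chi_2(1-\delta^2\Delta_{x,v})^{-1}\chi_1h$ the $\partial_{v_j}$ is integrated by parts onto the regularized function, so the $L^\infty$ coefficient is never differentiated and never multiplies an uncontrolled derivative; this yields $\nabla_v$ of the solution in $L^2_{loc}$ with constants involving only $\Vert B_e\Vert_{L^\infty}$. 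Once this is known, the magnetic term becomes an $L^2_{loc}$ right-hand side and the full $\mathcal H^2_{loc}$ regularity follows from Rothschild--Stein (Theorem \ref{thm_transitoire}) applied to the smooth, $B_e$-independent Kolmogorov operator $P_0=-v\cdot\nabla_x-\Delta_v$, not to the rough-coefficient operator itself; this regularity then licenses the integration by parts and the cut-off in $v$ that give $u=0$. To repair your argument you would need to replace your second and third paragraphs by a regularity statement of this kind for a single distributional solution, with the rough coefficient handled in divergence form, rather than a uniform maximal estimate for $K_\varepsilon$.
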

This implies that the domain of the operator $\bf K$ has the following property:
\begin{align}
\label{eq:1} 
D({\bf K})= D(K_{\max})\,.
\end{align}

  In this work, we are interested in specifying the domain of the operator $ \bf K
  $ introduced in \eqref{eq:1}. To accomplish this,  we will obtain maximal estimates for $ \bf K $, using techniques developed initially for the study of hypoellipticity of invariant operators on nilpotent groups. Before we state our main result, we establish some notation.
\begin{notation}~\
\begin{itemize}
\item $B^{2}(\mathbb R^d)$ (or $B^2_v$ to indicate the name of the variables) denotes the space
 $$B^2(\mathbb R^d) :=\{u\in L^{2}(\mathbb{R}^{d})\,/\, \forall (\alpha, \beta )\in \mathbb{N}^{2d},\, \vert \alpha \vert +\vert \beta \vert \leq 2 \,,\, v^{\alpha}\,\partial^{\beta}_{v}\,u\in L^{2}(\mathbb{R}^{d}) \}, $$
 which is equipped with its natural Hilbertian norm
\item $ \tilde{B}^2 (\mathbb{T}^d \times \mathbb{R}^d) $ is the space $ L^{2}  (\mathbb{T}^d_{x}, B^2_v(\mathbb{R}^d)) $
with the following Hilbertian norm:
\begin{align*}
 \tilde{B}^2 (\mathbb T^d\times \mathbb R^d) \ni u\longrightarrow \Vert u \Vert_{\tilde{B}^2}= \sqrt{\sum\limits_{
\vert \alpha \vert +\vert \beta \vert \leq 2} \, \left\Vert  v^{ \alpha}\, \partial^{\beta}_{v} u \right\Vert^{2}}.
\end{align*}
where $\Vert .\Vert$ is the $L^2 (\Bbb T^d \times \Bbb R^d)$ norm .
\item $\mathrm{Lipsch}(\mathbb{T}^{d})$ is the space of Lipschitizian functions from $\mathbb{T}^{d}$ with values in $\mathbb R^{d(d-1)/2}$, equipped 
 with the following norm:
$$ \Vert u \Vert_{\mathrm{Lipsch}(\mathbb{T}^{d})}=\Vert u\Vert_{L^{\infty}(\mathbb{T}^{d},\mathbb R^{d(d-1)/2})} + \sup\limits_{x,y\in \mathbb{T}^{d},
 x\neq y} \, \frac{\vert u (x)-u (y)\vert}{ d(x,y)}\,,$$ 
where $d$ is the natural distance in $\mathbb T^d$ and $\vert \cdot\vert$ is the Euclidean norm in $\mathbb R^{d(d-1)/2}$.
 \end{itemize}
\end{notation}
We can now state the main theorem of this article:
\begin{thm}\label{hypoelli0}
Let $ d=2 \text{ or } 3$. We assume that $ B_e \in \mathrm{Lipsch}(\mathbb{T}^{d})$. Then   for any $C_{1}>0$, there exists some $C>0$ such that for all $B_{e}$ with
 $\Vert B_{e}  \Vert_{\mathrm{Lipsch}(\mathbb{T}^{d})}\leq C_{1},$
 and for all $u \in \mathcal{S}(\mathbb{T}^{d}\times \mathbb{R}^{d})$, the operator $K$ satisfies the following maximal estimate:
\begin{align}
\Vert (v\cdot \nabla_{x} -(v\wedge B_e )\cdot \nabla_v )u\Vert+\Vert u \Vert_{\tilde{B}^{2}} \leq C(\,\Vert K u\Vert +\Vert u\Vert\, ) ,
\label{hypomax}
\end{align}
where $\Vert .\Vert$ is the $L^2 $ norm.
\end{thm}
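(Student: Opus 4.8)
The plan is to recognize \eqref{hypomax} as a maximal hypoellipticity estimate of order two for a type-2 H\"ormander operator, to prove it first for a constant-coefficient translation-invariant (nilpotent) model with a constant \emph{uniform} in the magnetic field, and then to transfer it to the variable-coefficient operator by localisation, using only the Lipschitz size of $B_e$.

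First I would record the algebraic structure. Write $K = X_0 + \mathcal{O}$ with $X_0 := v\cdot\nabla_x - (v\wedge B_e)\cdot\nabla_v$ and $\mathcal{O} := -\Delta_v + v^2/4 - d/2 = \sum_{j=1}^{d} a_j^{\ast}a_j$, where $a_j = \tfrac12 v_j + \partial_{v_j}$. The vector field $(v\wedge B_e)\cdot\nabla_v$ has vanishing $v$-divergence, so $X_0^{\ast} = -X_0$, and at frozen $x$ it generates an infinitesimal rotation in $v$, hence commutes with both $-\Delta_v$ and $|v|^2$; therefore $[\mathcal{O},X_0] = [\mathcal{O}, v\cdot\nabla_x] = -2\,\nabla_v\cdot\nabla_x$, which yields the identity $\|Ku\|^2 = \|X_0 u\|^2 + \|\mathcal{O}u\|^2 + 2\,\mathrm{Re}\,\langle \nabla_v u,\nabla_x u\rangle$. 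Combined with the classical maximal estimate for the harmonic oscillator, $\|u\|_{\tilde{B}^2}\le C(\|\mathcal{O}u\| + \|u\|)$ (pointwise in $x$, then integrated), and with Theorem \ref{prop 2}, this identifies \eqref{hypomax} with a subelliptic a priori estimate of order two for the type-2 H\"ormander operator $K$ in the sense of \cite{hormander1967hypoelliptic, rothschild1979criterion} (see also Chapter 9 of \cite{nier2005hypoelliptic}): the cross term $\langle\nabla_v u,\nabla_x u\rangle$ is of critical weight and can be absorbed once such an estimate is at hand, but it is not controllable by elementary integrations by parts alone.

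Next, for a \emph{constant} $b\in\mathbb{R}^{d(d-1)/2}$, let $K_b$ be the operator obtained by replacing $B_e$ by $b$ in \eqref{def_K}. Assigning weight $1$ to the fields $v_j$ and $\partial_{v_j}$ and weight $2$ to the transport field $X_0^{b}$, the operator $K_b$ becomes left-invariant and of type-2 on a simply connected nilpotent Lie group $G$; the brackets $[X_0^{b},\partial_{v_j}] = -\partial_{x_j} + (\text{bounded coefficients})\cdot\partial_v$ recover the missing $x$-directions, so H\"ormander's bracket condition holds and the Rothschild--Stein criterion \cite{rothschild1979criterion} gives the maximal estimate $\|X_0^{b}u\| + \|u\|_{\tilde{B}^2}\le C_b(\|K_b u\| + \|u\|)$. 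To make $C_b$ depend only on an upper bound $C_1$ for $|b|$, I would use the representation theory of $G$ (equivalently, the partial Fourier transform in $x$, which diagonalises the regular representation along the coadjoint orbits): for each frequency $\xi$ the operator $K_b$ reduces to an anharmonic Schr\"odinger-type operator $\widehat{K}_{b,\xi}$ in the single variable $v$; the rescaling $v\mapsto\langle\xi\rangle^{1/3}v$ exhibits its principal part, and a compactness argument in $(b,\xi/|\xi|)$ for $|\xi|$ large, together with a direct treatment of bounded $\xi$, yields the estimate for $\widehat{K}_{b,\xi}$ uniformly in $\xi$ and in $|b|\le C_1$; reassembling gives the model estimate with a constant $C=C(C_1)$.

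Finally I would globalise. Cover $\mathbb{T}^d$ by balls $B(x_k,\rho)$ with a subordinate partition of unity $\sum_k\chi_k^2\equiv 1$. On $\operatorname{supp}\chi_k$ one has $K = K_{b_k} + R_k$ with $b_k := B_e(x_k)$ and $R_k := \big((b_k - B_e(x))\wedge v\big)\cdot\nabla_v$, and the Lipschitz bound gives $\|R_k(\chi_k u)\|\le C_1\,\rho\,\|\chi_k u\|_{\tilde{B}^2}$. Applying the uniform model estimate to each $\chi_k u$, controlling the commutators $[\chi_k,K]$ (first order with bounded coefficients), summing over $k$, and choosing $\rho = \rho(C_1)$ small enough to absorb the $O(C_1\rho)$ errors into the left-hand side, one obtains \eqref{hypomax} with $C$ depending only on $C_1$. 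I expect the main obstacle to be the uniformity step: one must control the high frequencies $\xi$ and the coupling to an arbitrary bounded constant field $b$ simultaneously, and the whole argument must avoid differentiating $B_e$ more than once — which is precisely the reason for routing the proof through the constant-coefficient nilpotent model rather than working directly with the variable coefficients.
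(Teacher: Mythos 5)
Your overall architecture coincides with the paper's: freeze $B_e$ at a point to get a constant-field model $K_{\mathbf b}$, prove a maximal estimate for that model by viewing it inside a graded nilpotent (type-2) structure, then localize with a partition of unity, bound the error $\Vert((B_e-\mathbf b_k)\wedge v)\cdot\nabla_v(\chi_k u)\Vert\le C_1\rho\,\Vert\chi_k u\Vert_{\tilde B^2}$ by the Lipschitz hypothesis, control the commutators $[\chi_k,K]=-v\cdot\nabla_x\chi_k$ (note: this is a multiplication by a function linear in $v$, not an operator with bounded coefficients, so it is absorbed through the interpolation $\Vert u\Vert_{\tilde B^1}^2\le\eta\Vert u\Vert_{\tilde B^2}^2+C_\eta\Vert u\Vert^2$), and absorb. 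That part is faithful to the paper.

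The genuine gap is at the central step, the model estimate. You assert that, after assigning weights $1$ to $v_j,\partial_{v_j}$ and $2$ to the transport field, ``H\"ormander's bracket condition holds and the Rothschild--Stein criterion \cite{rothschild1979criterion} gives the maximal estimate.'' The criterion of \cite{rothschild1979criterion} (Theorem \ref{thm 1.11}) does not follow from the bracket condition alone: its hypothesis is that the associated homogeneous element of $\mathcal U_2(\mathcal G)$ satisfies \emph{Rockland's criterion}, i.e.\ injectivity of $\pi(\tilde K_{\mathbf b})$ on $\mathcal S_\pi$ for every nontrivial irreducible unitary representation $\pi$ (Theorem \ref{thm type 2}). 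Since $K_{\mathbf b}$ is not a sum of squares of real fields plus a drift --- it is a genuine polynomial in the degree-one generators $\partial_{v_j}$ and $iv_j$ (the potential $v^2/4$ and the magnetic cross terms $v_j\partial_{v_k}$ live in degree two) --- the classical Rothschild--Stein result for $\sum X_j^2+X_0$ does not apply, and the injectivity verification is precisely the heart of the matter; the paper carries it out by constructing the rank-3 stratified algebra, using skew-adjointness of $\pi(Y)$ to kill the transport and magnetic terms in $\mathrm{Re}\langle\pi(\tilde K_{\mathbf b})u,u\rangle$, and concluding $u=0$. Moreover, what is needed is a \emph{global} weighted estimate on $\mathbb R^{2d}$ (all $\Vert v^\alpha\partial_v^\beta u\Vert$, $|\alpha|+|\beta|\le2$, plus $\Vert X_0 u\Vert$), which local maximal hypoellipticity does not deliver by itself; the paper obtains it by realizing $K_{\mathbf b}=\pi_{\ell,\mathcal H}(\tilde K_{\mathbf b})$ as an induced representation (Proposition \ref{prop 2.11}) and invoking the Helffer--Nourrigat estimate for induced representations (Theorem \ref{thm 2.15}). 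Your alternative uniformity argument (partial Fourier transform in $x$, the $\langle\xi\rangle^{1/3}$ rescaling, compactness in $(b,\xi/|\xi|)$, in the spirit of Chapter 9 of \cite{nier2005hypoelliptic}) is a plausible different route, but as written it is only a sketch, and its key step --- uniform invertibility/injectivity of the limiting rescaled operators --- is exactly the Rockland-type condition you skipped; until that is proved (for the model, uniformly in $|\mathbf b|\le C_1$, and for $d=3$ after the rotation reducing $(v\wedge\mathbf b)\cdot\nabla_v$ to $|\mathbf b|(v_1\partial_{v_2}-v_2\partial_{v_1})$), the estimate \eqref{hypomax} is not established.
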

Given the density of  $\mathcal{S}(\mathbb{T}^d \times \mathbb{R}^d)$  in the domain of  $\bf K$, we obtain the following characterization of the domain of $ \bf K $:
\begin{cor}
\begin{align*}
D({ \bf K} )=\{u\in \tilde B ^2(\mathbb{T}^{d}\times\mathbb{R}^{d}) \, /\, \left(v\cdot\nabla_{x} -(v\wedge B_e )\cdot \nabla_{v}\right) u \in L^{2}(\mathbb{T}^{d}\times\mathbb{R}^{d}) \}.
\end{align*}
\end{cor}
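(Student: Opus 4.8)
The plan is to derive the corollary by combining the a priori estimate of Theorem \ref{hypoelli0} with the identification $D(\mathbf{K}) = D(K_{\max})$ recorded in \eqref{eq:1}. Throughout, write $T := v\cdot\nabla_x - (v\wedge B_e)\cdot\nabla_v$, so that $K = T - \Delta_v + v^2/4 - d/2$, and denote by $\mathcal{D}$ the space on the right-hand side of the asserted identity, that is $\mathcal{D} = \{\, u \in \tilde B^2(\mathbb{T}^d\times\mathbb{R}^d) : Tu \in L^2(\mathbb{T}^d\times\mathbb{R}^d)\,\}$. The statement then amounts to the two inclusions $D(\mathbf{K}) \subseteq \mathcal{D}$ and $\mathcal{D} \subseteq D(\mathbf{K})$.

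First I would prove $D(\mathbf{K}) \subseteq \mathcal{D}$ by a density argument. Since $\mathbf{K} = K_{\min}$, its domain is by definition the closure of $\mathcal{S}(\mathbb{T}^d\times\mathbb{R}^d)$ for the graph norm $u \mapsto \Vert u\Vert + \Vert Ku\Vert$. Fix $C_1$ with $\Vert B_e\Vert_{\mathrm{Lipsch}(\mathbb{T}^d)} \leq C_1$ and let $C$ be the associated constant furnished by Theorem \ref{hypoelli0}. Given $u \in D(\mathbf{K})$, pick $u_n \in \mathcal{S}$ with $u_n \to u$ and $Ku_n \to \mathbf{K}u$ in $L^2$; applying \eqref{hypomax} to the differences $u_n - u_m$ shows that $(u_n)_n$ is Cauchy in $\tilde B^2$ and that $(Tu_n)_n$ is Cauchy in $L^2$. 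Because $\tilde B^2 \hookrightarrow L^2$ and $Tu_n \to Tu$ in $\mathcal{D}'$, these limits must be $u$ and $Tu$ respectively; hence $u \in \tilde B^2$ and $Tu \in L^2$, i.e. $u \in \mathcal{D}$. Incidentally, this shows that \eqref{hypomax} extends by continuity to every $u \in D(\mathbf{K})$.

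Next I would establish the reverse inclusion $\mathcal{D} \subseteq D(\mathbf{K})$ using Theorem \ref{prop 2}. If $u \in \tilde B^2$, the defining norm bounds $v^\alpha\partial^\beta_v u$ in $L^2$ for all $|\alpha|+|\beta|\leq 2$, so in particular $\Delta_v u$, $v^2 u$ and $u$ lie in $L^2$, whence $(-\Delta_v + v^2/4 - d/2)u \in L^2$. If moreover $Tu \in L^2$, then $Ku = Tu + (-\Delta_v + v^2/4 - d/2)u \in L^2$ in the distributional sense, that is $u \in D(K_{\max})$. Since $B_e \in \mathrm{Lipsch}(\mathbb{T}^d) \subseteq L^\infty(\mathbb{T}^d,\mathbb{R}^{d(d-1)/2})$, Theorem \ref{prop 2} applies and yields \eqref{eq:1}, so $u \in D(K_{\max}) = D(\mathbf{K})$. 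Combined with the previous paragraph this gives $D(\mathbf{K}) = \mathcal{D}$.

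I do not anticipate a genuine obstacle: the mathematical substance is entirely contained in Theorems \ref{prop 2} and \ref{hypoelli0}, and the corollary is essentially bookkeeping. The only point requiring slight care is the density step, where one must check that the distributional limit of $Tu_n$ coincides with its $L^2$ limit and that convergence in $\tilde B^2$ is compatible with convergence in $L^2$; both follow at once from the continuous embedding $\tilde B^2 \hookrightarrow L^2$. If one wishes to say more, one can add that \eqref{hypomax}, together with an elementary bound of the form $\Vert Ku\Vert \leq \Vert Tu\Vert + C'\Vert u\Vert_{\tilde B^2}$, shows that on $D(\mathbf{K})$ the graph norm of $\mathbf{K}$ is equivalent to $\Vert u\Vert_{\tilde B^2} + \Vert Tu\Vert$, so that the identification of the domain is bicontinuous.
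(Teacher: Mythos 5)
Your proof is correct and follows essentially the same route the paper intends: the inclusion $D(\mathbf K)\subseteq\mathcal D$ by density of $\mathcal S(\mathbb T^d\times\mathbb R^d)$ in $D(\mathbf K)$ for the graph norm together with the maximal estimate \eqref{hypomax}, and the reverse inclusion by observing that $u\in\tilde B^2$ with $Tu\in L^2$ gives $Ku\in L^2$, hence $u\in D(K_{\max})=D(\mathbf K)$ by \eqref{eq:1}. The paper leaves these steps implicit, so your write-up is simply a more detailed version of the same argument.
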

~\\
\textbf{Organization of the article:}\\
 In the next section, we recall the notion of maximal hypoellipticity, and more specifically, we give some results of maximal hypoellipticity for polynomial operators of vector fields with the nilpotent approach. Then, in the sections 3 and 4, we use these techniques to prove the main result Theorem \ref{hypoelli0}, beginning with $ d = 2 $ and continuing with $ d = $ 3. Finally, in the appendix we give the proof of Theorem \ref{prop 2}\,.
\section{Review of maximal hypoellipticity in the nilpotent approach}
\subsection{Maximal hypoellipticity for polynomial operators of vector fields}
We are interested in the polynomial operators of vector fields. We consider $p+q$ real  $ C^{\infty} $ vector fields $(X_1, ..., X_p, Y_{1}, ..., Y_{q}) $  on an open set $ \Omega \subset \mathbb{R}^{d} $.
~\par Let $ P (z,\zeta_1, ..., \zeta_{p + q}) $ be a non-commutative polynomial of degree $ m $ in $ p + q $ variables, with $ C^{\infty} $ coefficients on $ \Omega $, and let $ \mathcal{P} $ be the differential operator
\begin{align}
\mathcal{P}=P(z,Z_1,...,Z_{p+q})=\sum_{\vert \alpha \vert \leq m} \,  a_{\alpha}(z)\,Z^{\alpha}, \quad \forall \alpha \in  \{1,...,p+q\}^{k}\,,
\label{po}
\end{align}
where, for  $ \ell \in \{1, ..., p + q \} $,  the vector field $ Z_{\ell} $,  is defined by
\begin{align*}
 &Z_{\ell}=X_{\ell} , \quad \forall \ell= 1,...,p,\\
 &Z_{\ell}=Y_{\ell-p},\quad \forall \ell= p+1,...,p+q,
\end{align*}
and where, for $\alpha=(\alpha_{1},..,\alpha_{k})\in \{1,...,p+q\}^{k}$,
$$\vert \alpha \vert =\sum_{j=1}^{k}\, d(\alpha_{j})\, \text{ with } \, d(\alpha_{j})=\begin{cases} 1\quad \text{  if } \alpha_{j}\in \{1,..,p\}\\
2\quad \text{  if } \alpha_{j}\in \{ p+1,..,p+q\}.
\end{cases} $$

It is further assumed that the vector fields $ Z_j $ with $ j \in \{1, ..., p + q \} $ satisfy the H\"ormander condition in $\Omega$: 
\begin{condition}\label{ho}
 There exists an integer $ r $ such that the vector space spanned by the iterated brackets of the vector fields $ Z_{j} $ of length less than or equal to $ r $, in each point $ z $ of $ \Omega, $ is  all of $ T_{z} \Omega $. 
\end{condition}
When $ q = 0 $ and the vector fields $Z_j$ satisfy Condition \ref{ho}, the operator $ \mathcal{P} $ is called a differential operator of type-$ 1 $. Thus the H\"ormander operator $\sum_{j=1}^p  X_j^2$ (case $q=0$) is called a ``\textbf{type-1 H\"ormander operator}".\\
When $q=1$ and the vector fields $Z_j$ satisfy Condition \ref{ho}, the operator $\mathcal{P}$ is called a differential operator of type-2.
 The operator also studied by H\"ormander $\sum_{j=1}^p X_j^2 + Y_1$ (case $ q = 1 $) is called a ``\textbf{type-2 H\"ormander operator}".

Now we introduce the following definition.
\begin{defi}
Let $ m \in \mathbb{N}^* $. The operator $ \mathcal{P} $ is maximal hypoelliptic at a point $ z $ of $ \Omega $, if there is a neighborhood $ \omega $ of $ z $, and a constant $ C >0 $ such that 
\begin{align}
\Vert u\Vert_{\mathcal{H}^{m}(\omega)}^{2} \leq C[\Vert \mathcal{P}u\Vert_{L^{2}(\omega)}^{2}+\Vert u \Vert_{L^{2}(\omega )}^{2}],\quad \forall u\in C^{\infty}_{0}(\omega), 
\end{align}
where $ \Vert \cdot\Vert_{\mathcal{H}^{m} (\omega)} $ denotes the standard norm whose square is defined by
\begin{align}\label{espace_bis}
u\mapsto \Vert u\Vert^{2}_{\mathcal{H}^{m}(\omega)} =\sum_{\vert \alpha \vert \leq m} \, \Vert Z_{\alpha_1}...Z_{\alpha_{k}} u\Vert_{L^{2}(\omega)}^{2},\quad\forall \alpha \in \{1,...,p\}^{m}.
\end{align}
\end{defi}
It has been shown by Helffer-Nourrigat in \cite{helffer1978hypoellipticite} that if the H\"ormander condition is verified at $ z $, then the maximal hypoellipticity at $ z $ implies that $ \mathcal{P}$ is hypoelliptic in a neighborhood of $ z $, which justifies the terminology. 

   It has also been shown by Rothschild-Stein in \cite{rothschild1976hypoelliptic} that 
if the homogeneous operator associated to $\mathcal{P} $ ({\it i.e.} $\sum_{\vert \alpha \vert=m}\,a_\alpha (z)\,Z^\alpha$) is hypoelliptic at a point $z_0$, in the sense introduced by L.~Schwartz, then $ \mathcal{P} $ is maximal hypoelliptic in a neighborhood of $ z_0 $.

\subsection{Nilpotent and graded Lie algebras}
We refer the reader to \cite[Chapter 2]{helffer1980hypoellipticite} for more details on this subject.
\begin{defi}
We say that a Lie algebra $ \mathcal{G} $ is graded nilpotent of rank $ r $, if it admits a decomposition of the form
\begin{align*}
&\mathcal{G}=\mathcal{G}_{1}\oplus...\oplus\mathcal{G}_{r}\,, \\& [\mathcal{G}_{i},\mathcal{G}_{j}]\subset \mathcal{G}_{i+j} \quad \text{ if } i+j\leq r \quad  \text{ and }\quad[\mathcal{G}_{i},\mathcal{G}_{j}]=0 \quad \text{ if } i+j>r\,.
\end{align*}
\end{defi}
\begin{defi}
Let $ \mathcal {G} = \mathcal{G}_{1} \oplus ... \oplus \mathcal{G}_{r} $ be a graded nilpotent Lie algebra with rank $ r $.
\begin{enumerate}
\item We say $\mathcal{G}$ is stratified of type ~ $ 1 $ (or simply stratified) if it is generated by $ \mathcal{G}_{1} $.
\item We say $\mathcal{G}$ is stratified of type ~$ 2 $ if it is generated by $ \mathcal {G}_{1} \oplus \mathcal{G}_{2} $.
\end{enumerate}
\end{defi}
From now on, $\mathcal{G}$ will always refer to a graded nilpotent Lie algebra of rank $r$.
\subsection{Representations  theory on Lie algebras }
Among the representations, the irreducible unitary representations play a crucial role. Kirillov's theory allows us to associate to every element of the dual $ \mathcal{G}^{*} $ of $ \mathcal{G} $ an irreducible representation.
Moreover, this theory says that any irreducible unitary representation can be represented in this way.

 To be more precise, we give a definition of an induced representation. The starting point is a subalgebra $ \mathcal{H} \subset \mathcal{G} $ and a linear form $ \ell $ on $ \mathcal{G} $ such that $$ \ell ([\mathcal{H} , \mathcal{H}]) = 0 .$$ We will then associate a representation $ \pi_{\ell, \mathcal{H}}.$ of the group  $ G:=\exp (\mathcal{G}) $ in $ V_\pi: = L^{2} (\mathbb{R}^{k (\pi)} ) $ which is uniquely defined  modulo unitary conjugation, where $ k (\pi) $ is the codimension of $ \mathcal{H} $ in $ \mathcal{G} $. For this construction and using the nilpotent character, we can find $ k = k (\pi) $ linearly independent vectors $ e_1, .., e_k $ such that any $ a\in \mathcal{G} $ can be written in the form:
\begin{align}
g:=\exp(a)=h \, \exp(s_k e_k) \, ... \,\exp(s_1 e_1) \label{fo 1}\,,
\end{align} 
and such that, if 
$$ \mathcal{A}_{j}= \mathcal{H}\oplus\mathbb{R}e_1\oplus..\oplus \mathbb{R}e_{k-j+1} \,,$$
then $ \mathcal{A}_{j-1} $ is ideal of codimension one in $ \mathcal{A}_{j} $.

With this construction, we can obtain that $ g \mapsto (s, h) $ is a global diffeomorphism from $ \mathcal{G} $ to $ \mathbb{R}^{k} \times \mathcal{H}
$. The induced representation is given by
$$ \left(\pi_{\ell ,  \mathcal{H}} ( \exp a ) f\right)(t) = \exp i\, \langle \ell , h(t,a)\rangle\, f(\sigma (t,a)),$$
where $ h (t, a) $ and $ \sigma (t, a) $ are defined by the following formula:
$$ \exp t_k e_k .... \exp t_1 e_1\, \exp a=\exp (h(t,a))\exp \sigma_k (t,a) e_k...\exp \sigma_1 (t,a) e_1\,. $$

We also note $ \pi_{\ell, \mathcal{H}} $ the representation of the associated Lie algebra defined by
\begin{align}\label{def 8}
  \pi_{\ell , \mathcal{H}} (a)u =\frac{d}{ds} \,\left( \pi_{\ell , \mathcal{H}} (e^{sa})u \right)
  _{|_{s=0}}\,, 
  \end{align} 
where the representation $ \pi_{\ell, \mathcal{H}} $ can be defined on the set of $ u \in V_{\pi} $ such that the mapping $ s \mapsto \pi_{\ell, \mathcal{H}} (e^{as}) u $ is of class $ C^{1} $.
We will actually work on the space $ \mathcal S_ \pi $ of the $ C^{\infty} $  representations $ \pi_{\ell, \mathcal{H}} $. \\
 
  More explicitly, we have  
  $$\pi_{\ell , \mathcal{H}}(a)=i\langle \ell ,h'(t,a) \rangle  + \sum_{j=1}^{k} \, \sigma'_{j} (t,a) \, \partial_{t_{j}},$$
  where $h' $ and $\sigma' $ designate
  \begin{align*}
h'(t,a)&:=\frac{d}{ds}\, \left(h(t,sa) \right)_{|_{s=0}},\\
\sigma'(t,a)&:=\frac{d}{ds}\, \left(\sigma(t,sa) \right)_{|_{s=0}}.
\end{align*}
In addition, $ \sigma $ has the following structure:
\begin{equation}\label{sigma}
\sigma_{j}(t_{j},....,t_1,a)=t_j + \psi_{j}(t_{j-1},..,t_1,a)\,,
\end{equation}
where $ \psi_{j} $ are polynomials on $ \mathbb{R}^{k} $, depending only on the given variables, with real coefficients.

We know from Kirillov's theory that, in the nilpotent case, the irreducible representations are associated with elements of $ \mathcal{G}^{*}$ and that, when $ \pi $ is irreducible, the space $ V_{\pi} $ identifies with $ L^{2} (\mathbb{R}^{k (\pi)}) $ where $ k (\pi) $ is a integer with $  L^2 (\mathbb{R}^{0}) = \mathbb{C} $ by convention. We denote by $ \widehat{G} $ the set of irreducible representations of the  simply connected  group $ G : = \exp \mathcal{G} $ associated to $ \mathcal{G} $. It is also important to note that in the case of an irreducible representation, $ \mathcal{S}_\pi $ identifies with the Schwartz space $ \mathcal{S} (\mathbb{R}^{k (\pi)}) $. \\
Returning to the induced representations $ \pi_{\ell, \mathcal{H}} $, two particular cases will interest us.

  When $ \ell = 0 $, we obtain the standard extension of the trivial representation of the $ H $ subgroup of $ G $. We can consider this as a representation on $ L^{2}(G / H) $. An interesting problem (which is solved in \cite{helffer1980hypoellipticite}) is to characterize the maximal hypoellipticity of $ \pi_{0, \mathcal{H}} (P) $ for $ P \in \mathcal{U}_{m} (\mathcal{G}) $ (elements of $\mathcal{U}(\mathcal{G}$) with degree $m$).
~ \par The second case is when the subalgebra $ \mathcal{H}\subset \mathcal{G}$ is of maximal dimension, for a fixed form $ \ell \in \mathcal{G}^{*} $, with the above property. In this case, we can show that the representation is irreducible. Moreover one can thus construct all the irreducible representations (up o unitary equivalence). Starting this time with an element $ \ell \in \mathcal{G}^{*} $, we can construct a maximal subalgebra $ V_{\ell} $ such that $ \ell ([V_{\ell} , V_{\ell}]) = 0 $. We can also show that the codimension $ k (\ell) $  of $ V_{\ell} $ is equal to $ \frac{1}{2} \, \mathrm{rank} \, B_{\ell} $, where $ B_{\ell} $ is the $ 2$-form defined by
$$ \mathcal{G}\times \mathcal{G}\to \ell ([X, Y]). $$
For $ a \in \mathcal{G} $, we define by $ (\mathrm{ad} \, a)^{*} $ the adjoint of $ \mathrm{ad} \, a:b\ni \mathcal{G}\to (\mathrm{ad}\, a)b:=[a,b]$ which is an endomorphism of $ \mathcal G^* $ defined by
$$ (\mathrm{ad} \, a )^{*}\, \ell (b):=\ell ([a,b]).$$
The group  $ G $ then naturally acts on $ \mathcal{G}^{*} $ by
$$ g \mapsto g \ell = \sum_{k = 0}^{r} \, \frac{(-1)^k}{k!} \, (\mathrm{ad} a)^{*k} \, \ell, $$
with $ g = \exp (a) $. \\

This action is called the coadjoint action. Kirillov's theory tells us that if $ \ell $ and $ \tilde{\ell} $ are on the same orbit for the coadjoint action, then the corresponding unitary representations are equivalent. Conversely, two different orbits give two non-equivalent irreducible representations. We can thus identify $ \widehat{G} $ with the set of irreducible representations of $ G $ with the set of $ G $-orbits in $ \mathcal{G}^{*} $:
$$\widehat{G}=\mathcal{G}^{*}/G. $$ 
In the proof of the main theorem, we find a class of representations of a Lie algebra $ \mathcal G $ in the space $ \mathcal S (\mathbb{R}^{k}) $ ($ k \geq 1 $) that have the following form:
\begin{defi}\label{definition10} For all $ X \in \mathcal{G} $, we define the representation $\pi$ as follows
\begin{align}
\label{def representation}
\pi(X)= P_{1}(X)\frac{ \partial}{\partial y_1}+P_{2} (y_1;X)\frac{\partial}{\partial y_2}+...+P_{k}(y_1,..,y_{k-1};X)\frac{\partial}{\partial y_k}+i\, Q(y_1,..,y_k;X),
\end{align}
where $ P_j (y; X) $ and $ Q (y; X) $ are polynomials in $y\in \mathbb{R}^{k} $, depending only on the given variables, with real coefficients, depending linearly on $ X \in \mathcal{G} $ and the linear forms $\{ X \mapsto P_j (0; X) \}_{1 \leq j \leq k }$ are linearly independent in $ \mathcal{G}^{*} $.  
\end{defi}
  We note that the induced representations always have this form. Conversely, it is natural to ask if the representation defined in Definition \ref{definition10} actually an induced representation?

  The positive answer is given by Helffer-Nourrigat in \cite{helffer1980hypoellipticite}.
By requiring, for all $ X \in \mathcal{G} $
\begin{align}
\label{condition 1}
\langle \ell, X\rangle =Q(0;X),
\end{align}
and denoting by $ \mathcal{H} $ the subspace of $ X \in \mathcal{G} $ such that $ P_j (0; X) = 0 $ ($ j = 1, ..., k $ ), these authors prove the following proposition:
 
\begin{prop} \label{prop 2.11}
Under the assumptions above,
\begin{enumerate}
\item[i)] The subspace $ \mathcal{H} $ is a subalgebra of $ \mathcal{G} $, and we have
\begin{align}
\label{condition 2}
\langle \ell, [X, Y] \rangle = 0 \quad \forall X, Y \in \mathcal {H} \,.
\end{align}
\item[ii)] The representation $ \pi $ is unitarily equivalent to $ \pi_{\ell, \mathcal{H}} $. There exists a unitary transform  $T$ such that
$$ \pi_{\ell, \mathcal{H}}(\exp X) Tf=T \pi (\exp X)f, \, \forall X\in \mathcal{G} \,\text{ and } \forall f\in \mathcal{S}(\mathbb{R}^{k}),$$
 where
 \begin{itemize}
\item $ T $ is defined by
\begin{align*}
Tf(t)= e^{i\varphi (t)}\,f(\theta (t)),\, \forall f\in \mathcal{S}(\mathbb{R}^k),  \forall t\in \mathbb{R}^k
\end{align*}
 where $\varphi$ is defined by
$$\varphi (t)=\langle  \ell, \gamma (t)\rangle=Q(0;\gamma (t))\,, $$
with $\gamma (t)$ is an element of $\mathcal{G}$ such that
$$\exp\, \gamma (t)=(\exp\, t_k \,X_k)...(\exp\, t_1\, X_1)\,. $$
\item $ \theta (t) = (\theta_{1} (t), ..., \theta_{k} (t)) $ is  global diffeomorphism of $ \mathbb{R}^{k} $ defined by
\begin{align*}
\theta_1 (t)&=t_1\\
\theta_j (t)&=t_j+H_{j}(t_1,..,t_{j-1} )\,,\, \forall j=2,..,k \,,
\end{align*}
\item $ H_j $ are polynomials.
\end{itemize}
\end{enumerate}
In particular $T$ sends $\mathcal S (\mathbb R^k)$ into $\mathcal S(\mathbb R^k)$.
\end{prop}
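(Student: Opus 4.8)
The plan is to follow the explicit change-of-variables strategy that is standard in Kirillov theory (as in Helffer--Nourrigat \cite{helffer1980hypoellipticite}), reducing a representation of the abstract form \eqref{def representation} to the model induced representation $\pi_{\ell,\mathcal{H}}$ by tracking how the polynomial coefficients transform under the group exponential coordinates. First I would establish part i). To see that $\mathcal{H}$ is a subalgebra: since $X\mapsto \pi(X)$ is a Lie algebra homomorphism, for $X,Y\in\mathcal{G}$ the operator $\pi([X,Y])=[\pi(X),\pi(Y)]$ is again a first-order operator of the form \eqref{def representation}, and one reads off its coefficients by commuting the two vector-field-plus-multiplication operators. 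The key structural point is the triangular (strictly lower-triangular in the $y$-dependence) shape of the coefficients $P_j(y_1,\dots,y_{j-1};X)$: when $X,Y\in\mathcal{H}$, so that $P_j(0;X)=P_j(0;Y)=0$ for all $j$, one checks inductively on $j$ that the $\partial_{y_j}$-coefficient of $[\pi(X),\pi(Y)]$ vanishes at $y=0$, using that each $P_j$ depends only on $y_1,\dots,y_{j-1}$ and that these earlier coordinates' coefficients already vanish at $0$. Hence $[X,Y]\in\mathcal{H}$. The identity $\langle\ell,[X,Y]\rangle=0$ for $X,Y\in\mathcal{H}$ then follows from \eqref{condition 1}: $\langle\ell,[X,Y]\rangle=Q(0;[X,Y])$, and $Q(0;[X,Y])$ is precisely the value at $y=0$ of the imaginary (multiplication) part of $[\pi(X),\pi(Y)]$, which by the same triangular bookkeeping is seen to be $0$ when all the $P_j(0;\cdot)$ vanish on $X$ and $Y$.

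For part ii), I would construct the intertwining operator $T$ explicitly. Choose $X_1,\dots,X_k\in\mathcal{G}$ dual to the independent linear forms $X\mapsto P_j(0;X)$, i.e.\ complementary to $\mathcal{H}$, and define $\gamma(t)\in\mathcal{G}$ by $\exp\gamma(t)=(\exp t_kX_k)\cdots(\exp t_1X_1)$, which by the Baker--Campbell--Hausdorff formula in the nilpotent group is a polynomial diffeomorphism. Set $\varphi(t)=\langle\ell,\gamma(t)\rangle=Q(0;\gamma(t))$ and let $\theta$ be the polynomial diffeomorphism of $\mathbb{R}^k$ obtained by comparing the $e_j$-coordinates in the decomposition \eqref{fo 1}; its triangular form $\theta_j(t)=t_j+H_j(t_1,\dots,t_{j-1})$ comes from the structure \eqref{sigma} of $\sigma$. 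Then $Tf(t)=e^{i\varphi(t)}f(\theta(t))$ is unitary on $L^2(\mathbb{R}^k)$ because $\theta$ is a volume-preserving polynomial diffeomorphism (its Jacobian is triangular with ones on the diagonal). The heart of the argument is the intertwining identity: I would verify $\pi_{\ell,\mathcal{H}}(\exp X)\,Tf=T\,\pi(\exp X)f$ by differentiating at the identity, i.e.\ checking $\pi_{\ell,\mathcal{H}}(X)\circ T=T\circ\pi(X)$ as operators on $\mathcal{S}(\mathbb{R}^k)$ for each $X\in\mathcal{G}$. Using the explicit formula $\pi_{\ell,\mathcal{H}}(a)=i\langle\ell,h'(t,a)\rangle+\sum_j\sigma'_j(t,a)\partial_{t_j}$ together with the defining relations for $h$ and $\sigma$ from the group law $\exp t_ke_k\cdots\exp t_1e_1\,\exp a=\exp(h(t,a))\exp\sigma_k(t,a)e_k\cdots\exp\sigma_1(t,a)e_1$, this reduces to a polynomial identity in $t$ and linear in $X$, matching the $\partial_{y_j}$-coefficients and the multiplication parts on both sides after conjugation by $e^{i\varphi}$ and composition with $\theta$.

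The main obstacle I anticipate is precisely this coefficient-matching in the intertwining identity: one must show that transporting $\pi(X)$ through the substitution $y=\theta(t)$ and the gauge $e^{i\varphi(t)}$ produces exactly the coefficients $\sigma'_j(t,X)$ and phase $\langle\ell,h'(t,X)\rangle$ of the induced representation. This is a bookkeeping argument driven entirely by the triangular structure — the fact that $P_j$ and $\psi_j$ depend only on the lower-index variables lets one run an induction on $j$ — but it is delicate because the chain rule through $\theta$ mixes the coordinates and one must carefully use that $P_j(0;X)$ matches the leading ($t_j$) part while $H_j$ absorbs exactly the difference between the polynomial tails $P_j(y_1,\dots,y_{j-1};X)$ and $\psi_j$. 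The cleanest route is to prove directly that the representation $T^{-1}\pi_{\ell,\mathcal{H}}(\cdot)T$ has the form \eqref{def representation} with the \emph{same} linear forms $X\mapsto P_j(0;X)$ and the same $Q(0;\cdot)=\langle\ell,\cdot\rangle$, and then invoke a uniqueness statement: a representation of the form \eqref{def representation} is determined up to the choice of the polynomial tails, which can be normalized away by a further triangular change of variables — so that agreement of the "initial data" $(P_j(0;\cdot),Q(0;\cdot))$ forces unitary equivalence. Finally, since $\theta$ is a polynomial diffeomorphism with polynomial inverse and $\varphi$ is a polynomial, $T$ and $T^{-1}$ map $\mathcal{S}(\mathbb{R}^k)$ to itself, giving the last assertion.
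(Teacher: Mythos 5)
First, note that the paper itself gives no proof of Proposition \ref{prop 2.11}: it is quoted as a result of Helffer--Nourrigat \cite{helffer1980hypoellipticite}, so there is no in-paper argument to compare with and you are in effect reconstructing the argument of that book. Your part i) is correct and essentially complete: since $\pi([X,Y])=[\pi(X),\pi(Y)]$, the coefficient of $\partial_{y_l}$ in the commutator is $\sum_{j<l}\bigl(P_j(y;X)\,\partial_{y_j}P_l(y;Y)-P_j(y;Y)\,\partial_{y_j}P_l(y;X)\bigr)$ and the multiplication part is $i\sum_j\bigl(P_j(y;X)\,\partial_{y_j}Q(y;Y)-P_j(y;Y)\,\partial_{y_j}Q(y;X)\bigr)$; at $y=0$ every term carries a factor $P_j(0;X)$ or $P_j(0;Y)$, so for $X,Y\in\mathcal{H}$ one gets $P_l(0;[X,Y])=0$ and $Q(0;[X,Y])=0$, which with \eqref{condition 1} yields \eqref{condition 2}. (No induction on $j$ is actually needed here; the evaluation at $y=0$ kills everything at once.)

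The genuine gap is in part ii). The whole content of ii) is the intertwining identity $\pi_{\ell,\mathcal{H}}(X)\circ T=T\circ\pi(X)$ for the explicit $T$, i.e.\ the verification that, after the substitution $y=\theta(t)$ and the gauge $e^{i\varphi(t)}$, the coefficients of $\pi(X)$ become exactly $\sigma'_j(t,X)$ and $\langle\ell,h'(t,X)\rangle$, and simultaneously that $\theta$ can be chosen of the triangular form $\theta_j(t)=t_j+H_j(t_1,\dots,t_{j-1})$ with polynomial $H_j$. You correctly identify this as the crux, but you do not carry it out; your fallback is to ``invoke a uniqueness statement'' asserting that two representations of the form \eqref{def representation} with the same initial data $\bigl(P_j(0;\cdot),Q(0;\cdot)\bigr)$ are unitarily equivalent via a triangular change of variables and a phase. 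That statement is not proved in your proposal and is essentially equivalent to the proposition itself (the induced representation $\pi_{\ell,\mathcal{H}}$ is also of the form \eqref{def representation}, so the proposition is precisely this uniqueness applied to the pair $\pi,\pi_{\ell,\mathcal{H}}$); invoking it is circular. To close the argument you must actually do the inductive coefficient matching in $j$, using \eqref{fo 1} and the triangular structure \eqref{sigma} to construct $H_j$ and $\varphi$ and to check that the two first-order operators coincide --- this is exactly the computation of \cite{helffer1980hypoellipticite} that the paper defers to. The remaining points (unitarity of $T$ from the unit-Jacobian triangular $\theta$, and preservation of $\mathcal{S}(\mathbb{R}^k)$ since $\theta$ is a polynomial diffeomorphism with polynomial inverse and $\varphi$ is polynomial) are fine, but they presuppose the triangular form of $\theta$ that the missing step must deliver.
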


Note that $ \theta $ is a global diffeomorphism of $ \mathbb{R}^{k} $, whose Jacobian is $ 1 $. We thus pass without any problem from one maximal inequality
  to the another by a change of variables which preserves Lebesgue measure.\\
  
We finish this part by proving a specific property that will be used later.
\begin{prop} \label{prop anti}
Let $ \pi $ be a representation defined by \eqref{def representation}. Then for every $ X \in \mathcal{G} $, the operator $ \pi (X) $ is formally skew-adjoint for the usual scalar product defined on the space $ L^2 (\mathbb{R}^{k}) $, i.e.
 $$ \forall u, v\in \mathcal S(\mathbb{R}^{k}), \quad \langle \pi (X)\,u, v \rangle = -\langle u,\pi (X)\,v \rangle.$$
 \begin{proof}
Let $ u , v \in \mathcal{S}(\mathbb{R}^k) $. For $ X \in \mathcal{G} $, we have
$$\langle \pi (X) u, v\rangle =\int_{\mathbb{R}^{k}}\, \pi(X)\,u(y)\, \overline{v(y)}\,dy , $$
by performing an integration by parts as a function of $ y_j $ with $ j = 1, ..., k $ and using the fact that $ P_{j} (y; X) $ are polynomials with real coefficients that depend only on $ y_1, .., y_ {j-1} $ for all $ j = 1, .., k $, we get
\begin{align*}
&\langle \pi (X) \,u , v\rangle \\
&=-\sum_{j=1}^{k}\,\int_{\mathbb{R}^{k}}\, u(y)\, \overline{P_{j}(y_1,..y_{j-1};X)\,\frac{\partial v}{\partial y_j}(y)}\,dy + \int_{\mathbb{R}^{k}}\, u(y)\,\overline{\left(-i\,Q(y_1,..,y_k;X)v(y)\right)}\,dy
,\end{align*}
since $ Q (y; X) $ is also a polynomial with real coefficients. Then by reusing the definition \eqref{def representation} of the representation $ \pi (X) $, we obtain
\begin{align*}
\langle \pi(X) \,u,v\rangle =-\langle u,\pi (X)\,v\rangle\,,\quad\forall u, v\in \mathcal{S}(\Bbb R^k),
\end{align*}
which implies the result.
\end{proof}
\end{prop}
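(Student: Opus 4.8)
The plan is to reduce the statement to a one-line integration by parts, once the right structural feature of $\pi(X)$ has been isolated. First I would fix $u,v\in\mathcal S(\mathbb R^k)$ and $X\in\mathcal G$, note that $\pi(X)$ maps $\mathcal S(\mathbb R^k)$ into itself (polynomial coefficients preserve the Schwartz class), so that $\langle\pi(X)u,v\rangle$ is well defined, and then expand it using the explicit formula \eqref{def representation}. This splits $\langle\pi(X)u,v\rangle$ into $k$ first-order pieces $\int_{\mathbb R^k}P_j(y_1,\dots,y_{j-1};X)\,\partial_{y_j}u(y)\,\overline{v(y)}\,dy$ and one zeroth-order piece $\int_{\mathbb R^k}i\,Q(y_1,\dots,y_k;X)\,u(y)\,\overline{v(y)}\,dy$.

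Next I would treat the $j$-th first-order piece by integrating by parts in the single variable $y_j$. The boundary terms vanish because $u$ and $v$ are Schwartz. The key observation is that, by Definition \ref{definition10}, $P_j$ depends \emph{only} on $y_1,\dots,y_{j-1}$, hence $\partial_{y_j}P_j\equiv 0$ and \emph{no} zeroth-order (divergence) term is produced by the integration by parts; moreover $P_j$ has real coefficients, so $\overline{P_j}=P_j$, and the piece becomes $-\int_{\mathbb R^k}u(y)\,\overline{P_j(y_1,\dots,y_{j-1};X)\,\partial_{y_j}v(y)}\,dy$. For the zeroth-order piece, $Q$ is real, so $\overline{iQ}=-iQ$ and it equals $-\int_{\mathbb R^k}u(y)\,\overline{i\,Q(y;X)\,v(y)}\,dy$. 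Summing the first-order pieces and the zeroth-order piece and using the expression \eqref{def representation} once more yields $\langle\pi(X)u,v\rangle=-\langle u,\pi(X)v\rangle$.

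I do not expect a genuine obstacle here; the argument is routine. The one point that actually makes the proposition true, and which I would be careful to use, is the lower-triangular dependence $P_j=P_j(y_1,\dots,y_{j-1};X)$ imposed in Definition \ref{definition10}: it is exactly what annihilates the terms $\sum_j(\partial_{y_j}P_j)$ that would otherwise appear in the formal adjoint and would, in general, obstruct skew-adjointness. The reality of the polynomials $P_j,Q$ and the explicit factor $i$ multiplying $Q$ in \eqref{def representation} are the remaining ingredients.
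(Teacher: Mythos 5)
Your argument is correct and is essentially the paper's own proof: integrate by parts in each $y_j$, use that $P_j$ depends only on $y_1,\dots,y_{j-1}$ (so no divergence term $\partial_{y_j}P_j$ appears) and that $P_j,Q$ have real coefficients, then recombine via \eqref{def representation}. Your explicit remark on the lower-triangular structure being the decisive point is exactly the mechanism the paper relies on, just stated more emphatically.
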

\begin{remarque}\label{rem inverse}  
Proposition \ref{prop anti} is in particular true for any induced unitary representation  $ \pi_{\ell, \mathcal{H}} $  on $ \mathcal{G} $. As noted above, the induced representations
  indeed satisfy \eqref{def representation}.

\end{remarque}
\subsection{Characterization of hypoellipticity in the case of homogeneous invariant operators on stratified groups}
The purpose of this part is to provide the necessary and sufficient conditions for a polynomial operator of vector fields to be  a maximal hypoelliptic operator.
\begin{defi}
Let $ \mathcal{G} $ be a graded real Lie algebra which is stratified of type 2. We define the enveloping algebra $ \mathcal{U} (\mathcal{G}) $ as the noncommutative algebra of polynomial expressions of the following form:
\begin{align}\label{exp 1}
P=\sum_{\vert \alpha \vert \leq m} \, a_{\alpha} Y^{\alpha},
\end{align}
where $ a_{\alpha} \in \mathbb{C} $,  $ Y_{i, j} $ $ (i = 1, ..., p_ {j} \text{ and } j = 1,2) $ denotes a basis of $ \mathcal{G}_{j} $, $ \alpha = (\alpha_{1}, .., \alpha_{k}) $ is a $ k $-uplet of couples $ (i, j) $ with $ i \in \{1, ...., p_ {j} \} $, and 
$$Y^{\alpha}=Y_{\alpha_1}...Y_{\alpha_k} \, \text{ with } \, \vert \alpha \vert = \sum_{l=1}^{k}\, j(\alpha_{l}).  $$
When in equality \eqref{exp 1}, we consider only terms with $ \vert \alpha \vert = m $, the set of polynomial expressions of this form is denoted by $ \mathcal{U}_{m} ( \mathcal{G}) $.
\end{defi}
It is noted in \cite[Chapter 2]{helffer1980hypoellipticite} that the representation $ \pi_{\ell, \mathcal{H}} $ of the algebra $ \mathcal G$ naturally extends to a representation of the enveloping algebra $ \mathcal{U} (\mathcal{G}) $.
For all $ t> 0 $, we define an automorphism $ \delta_t $ of $ \mathcal{G} $ by the condition
$$\delta_t (a) = t^j \, a \quad \text{ if } a\in\mathcal{G}_j. $$
One can of course extend the definition of $ \delta_{t} $ (called family of dilations) to the enveloping algebra $ \mathcal{U} (\mathcal{G}) $ by setting
$$\delta_{t}(P)=\sum_{\vert \alpha \vert \leq m}a_{\alpha}\,(\delta_{t} Y )^{\alpha} =\sum_{\vert \alpha \vert \leq m}a_{\alpha}\,t^{\vert \alpha\vert } \, Y^{\alpha} \mbox{ for } t>0\,. $$
We note that, 
$$
\mathcal{P}\in \mathcal{U}_{m}(\mathcal{G})\text{ if and only if } \delta_{t} (\mathcal{P} )=\sum_{\vert \alpha \vert =m}\,a_{\alpha}\,(\delta_{t} Y)^{\alpha} =t^{m}\mathcal{P}, \, \forall t>0\,. 
$$
To any element $ Y $ of $ \mathcal {G} $, we can associate a left-invariant vector field $ \lambda (Y) $ on the group $ G $ defined by
$$
(\lambda (Y) \,f) (u) = \frac{d}{dt}\, f\left( u\cdot \exp (tY)\right)_{|_{t=0}}, \, \forall f \in H^1 (\mathbb{R}^{k}), \forall u \in G \,. $$
This correspondence makes it possible to identify the enveloping algebra $ \mathcal{U} (\mathcal{G}) $ with the algebra of all the polynomials of left-invariant vector fields. To $ P \in \mathcal{U} (\mathcal{G}) $, defined in \eqref{exp 1}, we can associate
$$ \lambda (P)=\sum_{\vert \alpha \vert \leq m}\, a_{\alpha} \lambda (Y)^{\alpha}.$$

We recall the theorem conjectured by C.~Rockland in \cite{rockland1978hypoellipticity}, proved by him in the case of the Heisenberg group, then in the general case by R.~Beals \cite{beals1976operateurs} for the necessary condition
  and by B.~ Helffer and J.~ Nourrigat in \cite{helffer1980hypoellipticite} for the sufficient condition. Note that the case of rank 3, which ultimately is the only one that will be useful here,  was previously obtained in \cite{helffer1978hypoellipticite}. Combined with a result of Rothschild-Stein \cite{rothschild1976hypoelliptic} in the particular case where the order  $ m $ of the operator $ \mathcal{P} $ is even and the Lie algebra is stratified of type $ 1 $ or $ 2 $, the Helffer-Nourrigat Theorem takes the following form.
\begin{thm} \label{thm type 2}
Let $ \mathcal{G} $ be a graded and stratified Lie algebra of type $ 1 $ or of type $ 2 $ and let $ \mathcal{P} \in \mathcal{U}_{m} (\mathcal{G}) $ with $ m $ even (just in the case of type $ 2 $). Then the following assertions are equivalent:
\begin{enumerate}
 \item The operator $ \mathcal{P} $  defined in \eqref{po} is hypoelliptic in $ G $.
\item The operator $ \mathcal{P} $ defined in \eqref{po} is maximal hypoelliptic in $ G $.
\item For any $ \pi $ non-trivial irreducible and unitary representation in $ \widehat{G} $,  the operator  $ \pi (\mathcal {P}) $ is injective in $ \mathcal{S}_{\pi} $, where $ \mathcal{S_{\pi}} $ denotes the space of the $ C^{\infty} $ vectors of the representation.
\end{enumerate}
\end{thm}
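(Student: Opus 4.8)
The plan is to read Theorem~\ref{thm type 2} as an assembly of the results recalled just above --- the Rothschild--Stein criterion and the Rockland conjecture in the form settled by Beals (necessity) and Helffer--Nourrigat (sufficiency) --- all applied to the left-invariant homogeneous operator $\lambda(\mathcal{P})$ on the simply connected group $G=\exp\mathcal{G}$. I would prove the three assertions equivalent through the four implications $(2)\Rightarrow(1)$, $(1)\Rightarrow(2)$, $(1)\Rightarrow(3)$ and $(3)\Rightarrow(1)$, arranging matters so that all the genuinely hard analysis sits in the single step $(3)\Rightarrow(1)$.

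The two implications relating (1) and (2) use only the homogeneity and left-invariance of $\mathcal{P}\in\mathcal{U}_m(\mathcal{G})$. For $(2)\Rightarrow(1)$: maximal hypoellipticity on $G$ is in particular maximal hypoellipticity near each point, and since $\mathcal{G}$ stratified of type $1$ or $2$ means exactly that the generating vector fields satisfy H\"ormander's condition, the Helffer--Nourrigat result quoted above yields that $\mathcal{P}$ is hypoelliptic near each point, hence in $G$. For $(1)\Rightarrow(2)$: as $\mathcal{P}\in\mathcal{U}_m(\mathcal{G})$ it is homogeneous of degree $m$ for the dilations $\delta_t$, so it coincides with its own homogeneous part; thus ``$\mathcal{P}$ hypoelliptic in $G$'' is precisely the hypothesis of the Rothschild--Stein criterion at every point of $G$, which provides a maximal estimate on a fixed small ball. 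Left translation makes the constant uniform over $G$, and applying that estimate to $u\circ\delta_t$ and absorbing the lower-order term via $\delta_t(\mathcal{P})=t^{m}\mathcal{P}$ upgrades it to the global estimate defining maximal hypoellipticity on $G$. This is the only place the parity restriction enters --- it is needed solely to invoke Rothschild--Stein in the type-$2$ case.

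For the necessity $(1)\Rightarrow(3)$ I would cite Beals~\cite{beals1976operateurs}, or argue by transference: granting the maximal estimate on $G$ just obtained, remove the lower-order term by homogeneity, decompose $L^{2}(G)$ by the operator-valued (Kirillov--Plancherel) Fourier transform so that $\lambda(\mathcal{P})$ becomes the field of operators $\pi\mapsto\pi(\mathcal{P})$ and the $\mathcal{H}^{m}$-norm becomes the corresponding field of norms, and note that a nonzero $v\in\mathcal{S}_{\pi_0}$ with $\pi_0(\mathcal{P})v=0$ for some nontrivial irreducible $\pi_0$ would let one build test functions on $G$ concentrating near $\pi_0$ in Plancherel measure and violating the scale-invariant estimate. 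Hence $\pi(\mathcal{P})$ is injective on $\mathcal{S}_\pi$ for every nontrivial irreducible $\pi$.

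The hard implication is $(3)\Rightarrow(1)$, the Rockland conjecture itself; here I would cite Helffer--Nourrigat~\cite{helffer1980hypoellipticite} (and, for the rank-$3$ situation which is the only one actually needed later, already~\cite{helffer1978hypoellipticite}) rather than reprove it, sketching only the architecture. After the Fourier reduction above, hypoellipticity is equivalent to a lower bound for $\|\pi(\mathcal{P})v\|$ by a fixed multiple of $\|v\|_{\mathcal{H}^{m}_{\pi}}$ that is \emph{uniform} over $\widehat{G}=\mathcal{G}^{*}/G$ along a fixed homogeneity slice: the dilations reduce the problem to a compact slice of coadjoint orbits, this orbit space is partitioned into finitely many pieces on which the induced representations $\pi_{\ell,\mathcal{H}}$ depend polynomially on parameters, and on each piece the pointwise injectivity provided by $(3)$ is promoted to a uniform estimate by a compactness-and-induction argument over a flag of subalgebras --- the substance of~\cite{helffer1980hypoellipticite} --- in which Proposition~\ref{prop anti} enters precisely to identify the formal adjoint of $\pi(\mathcal{P})$ and turn the injectivity of the self-adjoint model operators into an estimate. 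The one serious obstacle is thus exactly this passage from representation-by-representation injectivity to a uniform a priori inequality over the whole unitary dual; everything else is bookkeeping around the two recalled criteria.
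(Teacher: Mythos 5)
Your proposal defers to exactly the sources the paper itself relies on: the paper states Theorem~\ref{thm type 2} without proof, presenting it as the Rockland conjecture settled by Beals (necessity) and Helffer--Nourrigat (sufficiency), combined with the Rothschild--Stein result to bring in maximal hypoellipticity for even order and type $1$ or $2$ stratification. Your assembly of the equivalences from these same ingredients is therefore essentially the same approach as the paper's, which offers no independent argument to compare against.
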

\begin{remarque} \label{rem 2.16}
When the H\"ormander condition \ref{ho} is satisfied, the condition (3) will be called the Rockland condition. To verify this condition, we observe that for any
  non-trivial irreducible and unitary  representation  $ \pi $ in $ \widehat{G} $ it suffices to show that if $ u $ satisfies $ \pi (\mathcal{P}) u = 0 $, then
\begin{itemize}
\item[$\bullet$] In the stratified case of Type 1, $$\pi (Y_{j}) \, u = 0 \,,\quad \forall j = 1, ..., p.$$ 
\item[$\bullet$] In the stratified case of Type 2,  \,$$\pi (X_{\ell}) = 0 \text{ and } \pi (Y_j) = 0,\, \forall \ell = 1 , .., p\, \text{ and }\,\forall
j = 1, .., q.$$
\end{itemize}
This implies indeed in the two cases that
$$ \pi (Y) u = 0, \quad \forall Y \in \mathcal{G}. $$
Then, assuming that $ \pi $ is irreducible and not trivial in Theorem \ref{thm type 2}, we get $ u = 0 $.
\end{remarque}
We will finish this part by quoting another result, from Helffer-Nourrigat in \cite{helffer1979approximation}, which appears in the proof of their theorem and which will be very useful to us.
\begin{thm} \label{thm 2.15}
  If $ \mathcal{P} \in \mathcal{U}_{m} (\mathcal{G}) $ is a maximal hypoelliptic operator, then there exists a strictly positive constant $ C $  such that, for any induced representation $ \pi = \pi_{\ell, \mathcal H} $, for all $ u $ in $ \mathcal S (\mathbb{R}^{k (\pi)}) $, we have the following maximal estimate:
\begin{align*}
\sum_{\vert \alpha \vert \leq m}\Vert \pi (Y^{\alpha}) u \Vert_{L^{2}(\mathbb{R}^{k})}^{2} \leq C\left( \Vert \pi(\mathcal{P}) u\Vert_{L^{2}(\mathbb{R}^{k})}^{2} + \Vert u\Vert_{L^{2}(\mathbb{R}^{k})}^{2} \right).
\end{align*}
\end{thm}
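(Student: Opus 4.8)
The plan is to derive this representation-level maximal estimate from the maximal hypoellipticity of $\mathcal{P}$ on the group $G=\exp\mathcal{G}$ by a concentration argument on the cosets of $H=\exp\mathcal{H}$, followed by a limiting procedure; this is the architecture of the approximation theorem of Helffer--Nourrigat (\cite{helffer1979approximation}), whose structure I now sketch.

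\emph{Step 1 (from local to global on $G$).} Since $\lambda(\mathcal{P})$ and all the $\lambda(Y^{\alpha})$ are left-invariant, the hypothesis that $\mathcal{P}$ be maximal hypoelliptic — a priori an estimate over a neighbourhood $\omega$ of a point — upgrades to an estimate on all of $L^2(G)$: cover $G$ by left-translates $g_i\omega$ with uniformly finite overlap, take a partition of unity $\{\varphi_i\}$ obtained by left-translating a fixed bump, so that the $\lambda(Y^{\alpha})\varphi_i$ are bounded uniformly in $i$, apply the local estimate to each $\varphi_i u$, and sum; the commutators $[\lambda(\mathcal{P}),\varphi_i]$ are differential operators of order at most $m-1$ in the $\lambda(Y^{\alpha})$'s and are absorbed into the left-hand side at the cost of enlarging the constant. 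This yields a constant $C_0$, depending only on $\mathcal{P}$, such that
\begin{equation}
\sum_{\vert\alpha\vert\le m}\Vert\lambda(Y^{\alpha})u\Vert_{L^2(G)}^2\ \le\ C_0\bigl(\Vert\lambda(\mathcal{P})u\Vert_{L^2(G)}^2+\Vert u\Vert_{L^2(G)}^2\bigr),\qquad\forall\,u\in C_0^\infty(G).
\end{equation}

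\emph{Step 2 (concentration and transfer).} Fix $\pi=\pi_{\ell,\mathcal{H}}$ realized on $L^2(\mathbb{R}^{k(\pi)})$ through the global diffeomorphism $g\mapsto(s,h)\in\mathbb{R}^{k(\pi)}\times\mathcal{H}$ recalled above, and recall that $\pi$ is induced from the unitary character $\chi_\ell(\exp b)=e^{i\langle\ell,b\rangle}$ of $H$, which is well defined precisely because $\ell([\mathcal{H},\mathcal{H}])=0$. Given $f\in\mathcal{S}(\mathbb{R}^{k(\pi)})$ and a cutoff $\theta\in C_0^\infty(\mathcal{H})$ spread out at scale $R$ by a dilation adapted to $\mathcal{H}$, set $u_R(s,h)=f(s)\,\chi_\ell(h)\,\theta_R(h)\in C_0^\infty(G)$, so that $\Vert u_R\Vert_{L^2(G)}^2=\Vert f\Vert^2\,\Vert\theta_R\Vert^2$. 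Differentiating along right-translates and using the explicit coordinates \eqref{fo 1}--\eqref{sigma}, one checks inductively that $\lambda(Y^{\alpha})u_R=\bigl(\pi(Y^{\alpha})f\bigr)(s)\,\chi_\ell(h)\,\theta_R(h)+r_\alpha$ and likewise $\lambda(\mathcal{P})u_R=\bigl(\pi(\mathcal{P})f\bigr)(s)\,\chi_\ell(h)\,\theta_R(h)+r_{\mathcal{P}}$, where each remainder $r_\bullet$ carries at least one derivative of $\theta_R$; when $\theta_R$ is scaled so that the polynomial coefficients of the relevant vector fields on $\mathcal{H}$ are homogeneous, $\Vert r_\bullet\Vert_{L^2(G)}/\Vert\theta_R\Vert\to 0$ as $R\to\infty$. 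Inserting $u_R$ into the displayed inequality, dividing by $\Vert\theta_R\Vert^2$, expanding the squares and letting $R\to\infty$, the cross terms and the quadratic remainders disappear, the term $\Vert u_R\Vert^2/\Vert\theta_R\Vert^2$ contributes the harmless $\Vert f\Vert^2$, and one is left with
\begin{equation}
\sum_{\vert\alpha\vert\le m}\Vert\pi(Y^{\alpha})f\Vert_{L^2(\mathbb{R}^{k})}^2\ \le\ C_0\bigl(\Vert\pi(\mathcal{P})f\Vert_{L^2(\mathbb{R}^{k})}^2+\Vert f\Vert_{L^2(\mathbb{R}^{k})}^2\bigr),
\end{equation}
with the \emph{same} constant $C_0$, which is exactly the claim.

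\emph{The main obstacle.} The delicate point is Step 2 in full generality: a subalgebra $\mathcal{H}$ with $\ell([\mathcal{H},\mathcal{H}])=0$ need not respect the grading, so there is no canonical dilation rendering the coefficients of the $H$-vector fields homogeneous, and both the decay $\Vert r_\bullet\Vert/\Vert\theta_R\Vert\to 0$ and — crucially — its uniformity in $\ell$ require work. The resolution is to realize every induced representation as a limit, in a suitable topology on pairs (representation, differential operator), of representations obtained from the regular representation $\lambda$ by the dilations $\delta_t$ (under which $\mathcal{P}$ is homogeneous of degree $m$ and the family of induced representations is merely permuted) together with translations, and to check that the best constant in the maximal inequality is lower semicontinuous for that topology; a compactness argument then reduces the uniformity to $\ell$ on a fixed sphere together with the trivial representation. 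Carrying this out rigorously is precisely the technical core of \cite{helffer1979approximation}.
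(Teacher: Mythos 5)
The paper never proves Theorem \ref{thm 2.15}: it is quoted as a known result of Helffer--Nourrigat \cite{helffer1979approximation}, so there is no internal argument to compare yours against; the only question is whether your outline stands on its own, and it does not. In Step 1, the commutators $[\lambda(\mathcal P),\varphi_i]$ produce terms of order $\le m-1$ acting on $u$ that appear on the right-hand side with a large constant while the same quantities sit on the left-hand side with constant $1$, so they cannot simply be ``absorbed into the left-hand side''; one needs either an interpolation inequality for the graded norms or, more directly, the homogeneity of $\mathcal P\in\mathcal U_m(\mathcal G)$ under the dilations $\delta_t$ to scale the lower-order contributions away. This is repairable, but it is not automatic as written.

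The genuine gap is Step 2, and you name it yourself. The identity $\lambda(Y^{\alpha})u_R=(\pi(Y^{\alpha})f)\,\chi_\ell\,\theta_R+r_\alpha$ holds cleanly only for an exactly $H$-covariant ansatz; once the cutoff $\theta_R$ is inserted, the remainders are not merely ``one derivative of $\theta_R$'': the coefficients of $\lambda(Y^{\alpha})$ in the coordinates \eqref{fo 1}--\eqref{sigma} grow polynomially in $h$, and for a subalgebra $\mathcal H$ that does not respect the grading there is no adapted dilation of $\mathcal H$ forcing $\Vert r_\bullet\Vert/\Vert\theta_R\Vert\to 0$, let alone with any uniformity. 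Your closing paragraph concedes this and defers the resolution --- realizing each $\pi_{\ell,\mathcal H}$ as a limit of translated and dilated data and proving lower semicontinuity of the best constant --- to \cite{helffer1979approximation}. But that limiting/approximation argument \emph{is} the theorem: without it neither the estimate for a general $\mathcal H$ nor the uniformity of $C$ over all induced representations is established, and what remains of your proposal is a heuristic reduction plus the same citation the paper already gives. As submitted, this is a proof sketch of someone else's theorem, not a proof.
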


\subsection{Application to the maximal hypoellipticity of vector fields}
  We assume that the fields $ X_i $ and $ Y_{j} $ for $ i \in \{1, .., p \} $ and $ j \in \{1, .., q \} $ satisfy the H\"ormander condition \eqref{ho} in $ z_0 $. To the operator defined in \eqref{po}, we first associate in $ z_{0} $, an element of the enveloping algebra $ \mathcal{U} (\mathcal{G}) $ where $ \mathcal{G} $ denotes the free nilpotent Lie algebra with $ p + q $ generators $ (Z_1, ..., Z_ {p + q}) $ with rank $ r $ . Here, we follow Rothschild-Stein's approach in \cite{rothschild1976hypoelliptic}.
We associate to the operator $ P $  an element  $ \mathcal{P}_{z_{0}} $ of $ \mathcal{U}_m(\mathcal{G}) $ defined by
$$ \mathcal{P}_{z_0} = \sum_{\vert \alpha \vert = m} \, a_{\alpha} (z_0) \, Z^{\alpha} \,,$$
and we recall a result based on the articles \cite{rothschild1979criterion}, \cite {helffer1979caracterisation} and \cite {bolley1982condition}:
\begin{thm}[Theorem 0.7 in \cite{rothschild1979criterion}] \label{thm 1.11}  
Let $ \mathcal{P} $ be the operator defined in \eqref{po} satisfying the H\"ormander condition \ref{ho} at $ z_0 \in \Omega $. If $ \mathcal{P}_{z_0} $ satisfies the  Rockland's Criterion then the operator $ \mathcal{P}$  is maximal hypoelliptic in a neighborhood of $ z_0 $.
\end{thm}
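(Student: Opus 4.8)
The proof combines the solution of the Rockland conjecture for graded stratified algebras of type~$1$ and~$2$ (Theorem~\ref{thm type 2}), which settles the translation-invariant model, with the Rothschild--Stein lifting and approximation scheme, which transfers the model estimate to the variable-coefficient operator; this is the route of \cite{rothschild1979criterion}, refined in the type-$2$ situation by \cite{helffer1979caracterisation} and \cite{bolley1982condition}.

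First I would reduce to the free case. Since $Z_1,\dots,Z_{p+q}$ satisfy the H\"ormander condition~\ref{ho} at $z_0$, the Rothschild--Stein lifting theorem gives, on a neighbourhood of $z_0$ in $\Omega\times\mathbb{R}^{N}$ for a suitable $N$, fields $\widetilde Z_1,\dots,\widetilde Z_{p+q}$ projecting onto $Z_1,\dots,Z_{p+q}$ and free up to step~$r$ at the lifted point $\widetilde z_0$. A maximal estimate for the lifted operator $\widetilde{\mathcal{P}}$ near $\widetilde z_0$ yields, after integrating out the $N$ extra variables, the maximal estimate for $\mathcal{P}$ near $z_0$, so it suffices to treat the free case. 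There, suitable exponential coordinates centred at $\widetilde z_0$ identify a neighbourhood with a neighbourhood of the identity in the simply connected group $G=\exp\mathcal{G}$ of the free nilpotent graded algebra $\mathcal{G}$ on $p+q$ generators of rank~$r$ (the $X_\ell$ of degree one, the $Y_j$ of degree two, so $\mathcal{G}$ is stratified of type~$2$, and of type~$1$ when $q=0$), and each $\widetilde Z_j$ coincides, modulo fields of strictly lower $\delta_t$-homogeneity, with the left-invariant field $\lambda(Z_j)$. The homogeneous model attached to $\widetilde{\mathcal{P}}$ at $\widetilde z_0$ is then exactly $\lambda(\mathcal{P}_{z_0})$, with $\mathcal{P}_{z_0}=\sum_{|\alpha|=m}a_\alpha(z_0)Z^\alpha\in\mathcal{U}_m(\mathcal{G})$.

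Next I would run the model estimate and the perturbation. By hypothesis $\mathcal{P}_{z_0}$ satisfies Rockland's criterion, i.e. $\pi(\mathcal{P}_{z_0})$ is injective on $\mathcal{S}_\pi$ for every non-trivial irreducible unitary $\pi\in\widehat{G}$; as $\mathcal{G}$ is stratified of type~$1$ or~$2$ (with $m$ even in the type-$2$ case, which is the case relevant to the Fokker--Planck operator), Theorem~\ref{thm type 2} gives that $\lambda(\mathcal{P}_{z_0})$ is maximal hypoelliptic on $G$, hence $\|u\|_{\mathcal{H}^{m}}^2\le C\big(\|\lambda(\mathcal{P}_{z_0})u\|_{L^2}^2+\|u\|_{L^2}^2\big)$ for $u\in C_0^\infty(G)$. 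Writing $\widetilde{\mathcal{P}}=\lambda(\mathcal{P}_{z_0})+R$ with $R$ of strictly lower homogeneity in the quasi-metric adapted to the $\widetilde Z_j$, one builds a parametrix for $\lambda(\mathcal{P}_{z_0})$ from its homogeneous fundamental solution and controls the error produced by $R$ on a small enough ball by the $L^2$-boundedness of the associated non-isotropic singular integrals on $G$; absorbing this error gives $\sum_{|\alpha|\le m}\|\widetilde Z^{\alpha}u\|_{L^2(\widetilde\omega)}^2\le C\big(\|\widetilde{\mathcal{P}}u\|_{L^2(\widetilde\omega)}^2+\|u\|_{L^2(\widetilde\omega)}^2\big)$ on a neighbourhood $\widetilde\omega$ of $\widetilde z_0$, and descending as in the first step finishes the argument.

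The main obstacle is this last transfer step. The lifting and the identification of the model are essentially formal once~\ref{ho} holds, and the model estimate is precisely Theorem~\ref{thm type 2}; but making the parametrix argument rigorous needs the full Rothschild--Stein analysis on spaces of homogeneous type---a quasi-metric and balls adapted to the vector fields, homogeneous fundamental solutions for the model operator, and Calder\'on--Zygmund bounds for the resulting kernels---together with, in the type-$2$ case, the extension of the Rockland theorem to even-order operators supplied by \cite{helffer1979caracterisation} and \cite{bolley1982condition}; the remainder is bookkeeping with the dilations $\delta_t$ and the grading of $\mathcal{G}$.
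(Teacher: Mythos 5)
The paper does not prove this statement: it is imported verbatim as Theorem~0.7 of \cite{rothschild1979criterion}, with the surrounding remarks only pointing to \cite{rothschild1979criterion}, \cite{helffer1979caracterisation} and \cite{bolley1982condition} for its justification. Your sketch reconstructs precisely the route of those references (Rothschild--Stein lifting to the free nilpotent group, identification of the frozen homogeneous model with $\lambda(\mathcal{P}_{z_0})$, Rockland's criterion via Theorem~\ref{thm type 2} for the invariant model, then parametrix/perturbation and descent), so it is correct in outline and essentially the same approach as the cited source.
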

The inverse is not true in general. It is the whole purpose of the book  \cite{helffer1980hypoellipticite} to give necessary and sufficient conditions for this maximal hypoellipticity.

\section{Proof of Theorem \ref{hypoelli0} when $ d = 2 $}
The proof consists of constructing $ \mathcal{G} $, a graded and stratified algebra of type $2$, and, at any point $ x \in \mathbb{T}^{2} $, an element $ \mathcal{K}_{x} $ of $ \mathcal{U} _ {2} (\mathcal{G}) $ which is hypoelliptic. The maximal estimate obtained for each $ \mathcal{K} _{x} $ will then be combined to give a maximal estimate for the operator $ K $.
~ \par To define $ \mathcal{K}_{x} $, we replace
  $ B_e (x) $ by a fixed constant $ \bf b \in \mathbb{R} $, and we will show a global estimate for the following model:
  \begin{align*}
K_\textbf{b} = v\cdot \nabla_{x} +\textbf{b} (v_1 \partial_{v_{2}} -v_2 \partial_{v_{1}}) -\Delta_v + v^{2}/4 -1, \text{ in } \mathbb{R}^{2}\times\mathbb{R}^{2},
\end{align*}
which appears as the image by an induced representation of $ \mathcal{K}_{x} $. 

  Then we will use a partition of unity and check the errors coming from the localization of the estimates.
\subsection{Application of Proposition \ref{prop 2.11}}
We will show that we can put ourselves within the framework of this proposition. 
We therefore look for a graded Lie algebra $ \mathcal G $ of type 2, a subalgebra $ \mathcal{H} $, an element $ \tilde{K}_\textbf{b} $ in $ \mathcal U_2 (\mathcal G) $ and a linear form $ \ell $ such that
 $$ 
  \pi_{\ell, \mathcal{H}} (\tilde K_\textbf{b})=K_{\textbf{b}}\,.
  $$
For this, one determines the necessary conditions on the brackets between the generating elements of this algebra.
  In the writing of $ K_{\textbf{b}} $ we can see the differential operators of degree $ 1 $ with the following polynomial coefficients:
  \begin{align}
&X_{1,1}^{'}=\partial_{v_1} && X_{1,1}^{''}=iv_{1}\\
&X_{2,1}^{'}=\partial_{v_2} && X_{2,1}^{''}=iv_2 \\
&X_{1,2}=v.\nabla_{x}\,.
\end{align}
$ K_{\textbf{b}} $ is indeed written as a polynomial of these five differential operators  
  \begin{align}\label{ecriture 17}
K_{\textbf{b}}=X_{1,2}&-\sum_{k=1}^{2}\, \left( (X_{k,1}^{'})^{2} +\frac{1}{4} (X_{k,1}^{''})^{2} -\frac{i}{4}(X_{k,1}^{'}X^{''}_{k,1} -X^{''}_{k,1}X_{k,1}^{'})    \right)\notag\\
&-i\,\textbf{b}\left( X_{1,1}^{'}X^{''}_{2,1}-X_{2,1}^{'}X^{''}_{1,1} \right).
\end{align}
We now look at the Lie algebra generated by these five operators and their brackets. This leads us to introduce three new elements that verify the following relations  :
\begin{align*}
& X_{2,2}:=[X_{1,1}^{'},X_{1,1}^{''}]=[X_{2,1}^{'},X_{2,1}^{''}]=i\,,\,\\
& X_{1,3}:=[X_{1,2},X_{1,1}^{'}]=\partial_{x_{1}} \,,\, X_{2,3}:=[X_{1,2},X_{2,1}^{'}]=\partial_{x_2}.
\end{align*}
We also observe that we have the following properties:
\begin{align*}
 &[X_{1,1}^{'},X_{2,1}^{'}]=[ X_{1,1}^{''},X_{2,1}^{''}]=0\,, && \\
 &[ X_{j,1}^{'},X_{k,3}]=[ X_{j,1}^{''},X_{k,3}]=[ X_{k,3},X_{2,2}]=...=0&\,,\, \forall j,k=1,2\,.&
 \end{align*}
 
 We then construct a graded Lie  algebra $ \mathcal{G} $ verifying the same commutator relations. More precisely, $ \mathcal{G} $ is stratified of type $ 2 $, nilpotent of rank $ 3 $, its underlying vector space is $ \mathbb{R}^{8} $, and $ \mathcal{G}_{1} $ is generated by $ Y_{1,1}^{'}, Y^{'}_{2,1}, Y_{1,1}^{''} $ and $ Y^{'' }_{2,1} $, $ \mathcal{G}_{2} $ is generated by $ Y_{1,2} $ and $ Y_{2,2} $ and $ \mathcal{G}_{3} $ is generated by $ Y_{1,3} $ and $ Y_{2,3} $. The laws of algebra are given by
\begin{align}
& Y_{2,2}=[Y_{1,1}^{'},Y_{1,1}^{''}]=[Y_{2,1}^{'},Y_{2,1}^{''}]\,,\
 Y_{1,3}=[Y_{1,2},Y_{1,1}^{'}]\,, \, Y_{2,3}=[Y_{1,2},Y_{2,1}^{'}]\,,\label{annu 0}\\
 &[Y_{1,1}^{'},Y_{2,1}^{'}]=[ Y_{1,1}^{''},Y_{2,1}^{''}]=0\,, \label{annu 1} \\
 &[ Y_{j,1}^{'},Y_{k,3}]=[ Y_{j,1}^{''},Y_{k,3}]=[ Y_{k,3},Y_{2,2}]=...=0\,\, \forall j,k=1,2\,.\label{annu 2}
 \end{align}
 We check that the mapping $ \pi $ (with the convention that if $ \diamond = \emptyset $ there is no exponent) defined on its basis by 
 \begin{align}
\pi(Y_{i,j}^{\diamond})= X_{i,j}^{\diamond}\text{ with }i=1,2, j=1,2,3\text{ and }\diamond\in \{ \emptyset, \, \prime, \prime\prime\}.
\end{align}
defines a representation of the Lie algebra $ \mathcal{G} $. 

 We now see that our representation $ \pi $ can be rewritten in the following form
 \begin{align*}
\pi(Y)&=P_{1}(Y)\partial_{v_1}+P_{2}(v_1; Y)\partial_{v_2}+P_{3}(v_1, v_2;Y)\partial_{x_1}+P_{4}(v_1,v_2,x_1;Y)\partial_{x_2}\\
&\qquad +iQ(v_1,v_2,x_1,x_2; Y)\,,
\end{align*}
where for all $ Y \in \mathcal{G} $, there are $ a, b, c, d, \alpha, \beta, \gamma, $ and $ \delta \in \mathbb{R} $ such that
\begin{align*}
Y&=aY^{'}_{1,1}+bY^{'}_{2,1}+cY_{1,1}^{''}+dY^{''}_{2,1}\\
 &\qquad +\alpha Y_{1,2}+\beta \,Y_{2,2}\\
 &\qquad + \gamma Y_{1,3}+\delta Y_{2,3},
\end{align*} 
 and the polynomials $ P_{j} $ with $ j = 1, .., 4 $ and $ Q $ are defined by
\begin{align*}
&P_{1}(Y)=a, \,P_{2}(v_1;Y)=b, \, P_{3}(v_1,v_2;Y)=\alpha v_1+\gamma\\
&P_{4}(v_1,v_2,x_1;Y)=\alpha v_2+\delta  \text{ and } Q(v,x;Y)=c v_1 +d v_2 +\beta.
\end{align*}
We can now apply Proposition \ref{prop 2.11}. We then obtain $ \pi = \pi_{\ell, \mathcal{H}} $ with
  \begin{align*}
&\ell \in \mathcal{G}^{*}, \, \langle \ell, Y \rangle = Q(0;Y)=\beta,\\
&\mathcal{H}:=\{Y\in \mathcal{G}/\, P_1 (0;Y)=...=P_4 (0;Y)=0\}
  = \mathrm{Vect} (Y_{1,1}^{''}, Y_{2,1}^{''}, Y_{1,2}, Y_{2,2}), 
\end{align*}

Let's go back to our operator $ K_{\textbf{b}} $ which was written as a polynomial of the vector fields $\{X_{j,2},X_{j,3},X_{j,1}^{'},X_{j,1}^{''}\}_{j=1,2}$ in \eqref{ecriture 17}. We then define $ \tilde K_\textbf{b} $ as the same polynomial but in this time the operator  is function of the vector fields  $\{Y_{j,2},Y_{j,3},Y_{j,1}^{'},Y_{j,1}^{''}\}_{j=1,2}$ defined as follows
\begin{align}
\tilde{K}_{\textbf{b}}=Y_{1,2}&-\sum_{k=1}^{2}\, \left( (Y_{k,1}^{'})^{2} +\frac{1}{4} (Y_{k,1}^{''})^{2} -\frac{i}{4}(Y_{k,1}^{'}Y^{''}_{k,1} -Y^{''}_{k,1}Y_{k,1}^{'})    \right)\\
&-i\,\textbf{b}\left( Y_{1,1}^{'}Y^{''}_{2,1}-Y_{2,1}^{'}Y^{''}_{1,1} \right)\notag 
\end{align}
 such that
$$
\pi_{\ell, \mathcal{H}}(\tilde K_\textbf{b}) =K_\textbf{b}\,.
$$
Note that with the notation used in the introduction to the section, we have:
$$ 
\mathcal{K}_{x} = \widetilde K_{B_e(x)}\,.
$$
~
\subsection{Verification of Rockland's Criterion}~\\
To prove the maximal hypoellipticity, we must check the Rockland criterion (see Theorem \ref{thm type 2}) for $ \tilde K_\textbf{b} $.
  Let $ \pi $ be a unitary irreducible non-trivial representation of $ G $ in $ V_\pi $. We will show that the operator $ \pi (\tilde{K}_{\textbf{b}}) $ is an injective operator in the space $ \mathcal{S}_{\pi} $, which identifies when $ \pi $ is irreducible to $ \mathcal{S} (\mathbb{R}^{k (\pi)}) $. Let $ u \in \mathcal{S} (\mathbb{R}^{k (\pi)}) $ such that
$$\pi(\tilde{K}_{\textbf{b}}) u=0. $$ 
On the one hand, we have
$$\mathrm{Re} \langle \pi (\tilde{K}_{\textbf{b}})u,u\rangle =0. $$
On the other hand, by integration by parts and by using that the operator $ \pi (Y) $ is a formally skew-adjoint operator (see Proposition \ref{prop anti}), we obtain
\begin{align*}
\mathrm{Re} \langle \pi(\tilde{K}_{\textbf{b}})u,u\rangle &= \underbrace{\mathrm{Re} \langle \pi (Y_{1,2})u,u \rangle}_{\mathrm{I}}+\sum_{k=1}^{2}\,\left( \Vert \pi(Y^{'}_{k,1})u\Vert^{2} +\Vert \pi(Y_{k,1}^{''})u\Vert^{2} \right) \\
&-\textbf{b}\,\underbrace{\mathrm{Re} \left\langle i \left( \pi(Y_{1,1}^{'})\pi(Y^{''}_{2,1})-\pi(Y_{2,1}^{'})\pi(Y^{''}_{1,1}) \right)u, u\right\rangle}_{\mathrm{II}}.
\end{align*}
First, we will calculate the term $ \mathrm{I}  $. Using the fact that the operator $ \pi (Y_{1,2}) $ is skew-adjoint according to Proposition \ref{prop anti}, we obtain
$$  \langle \pi (Y_{1,2})u,u \rangle =- \langle u, \pi(Y_{1,2})u\rangle=-\overline{\langle \pi (Y_{1,2})u,u \rangle}\,. $$
Therefore, we have
$$\langle \pi (Y_{1,2})u,u \rangle + \overline{\langle \pi (Y_{1,2})u,u \rangle} =0, $$
so 
$$\mathrm{I}=\mathrm{Re} \langle \pi (Y_{1,2})u,u \rangle =\left(\langle \pi (Y_{1,2})u,u \rangle + \overline{\langle \pi (Y_{1,2})u,u \rangle}\right)/2 =0.$$
Then we go to calculating the term $ \mathrm{II} $. Using that $ \pi $ is a representation and the relations of the given commutators in \eqref{annu 1}, we get
 \begin{align*}
&[\pi(\tilde{Y}_{1,1}^{'}),\pi(\tilde{Y}^{''}_{2,1})]= \pi \left([\tilde{Y}_{1,1}^{'},\tilde{Y}^{''}_{2,1} ] \right)=0\\
&[\pi(\tilde{Y}_{2,1}^{'}),\pi(\tilde{Y}^{''}_{1,1}) ]=\pi\left([\tilde{Y}_{2,1}^{'}, \tilde{Y}^{''}_{1,1}] \right) =0\,.
\end{align*}
Then the operators $  i  \pi(Y_{1,1}^{'})\pi(Y^{''}_{2,1})$ and $i \pi(Y_{2,1}^{'})\pi(Y^{''}_{1,1})$  are skew-adjoint on the space $ \mathcal{S}_{\pi} $ with the scalar product of $ V_\pi $ for any representation $ \pi $. By integration by parts, we have
\begin{align*}
&\left\langle i  \pi(Y_{1,1}^{'})\pi(Y^{''}_{2,1}) u, u\right\rangle=-\overline{\left\langle i  \pi(Y_{1,1}^{'})\pi(Y^{''}_{2,1}) u, u\right\rangle },\\
&\left\langle i  \pi(Y_{2,1}^{'})\pi(Y^{''}_{1,1}) u, u\right\rangle=-\overline{\left\langle i  \pi(Y_{2,1}^{'})\pi(Y^{''}_{1,1}) u, u\right\rangle },
\end{align*}
and then, 
\begin{align*}
\mathrm{Re}\left\langle i \pi(Y_{1,1}^{'})\pi(Y^{''}_{2,1}) u,u\right\rangle=\mathrm{Re} \left\langle i \pi(Y_{2,1}^{'})\pi(Y^{''}_{1,1}) u,u\right\rangle =0\,.
\end{align*}
 Therefore, we have
$$\mathrm{II}=\mathrm{Re} \left\langle i\left( \pi(Y_{1,1}^{'})\pi(Y^{''}_{2,1}) - \pi(Y_{2,1}^{'})\pi(Y^{''}_{1,1}) \right)u,u\right\rangle =0\,. $$
The identity
$$ \mathrm{Re} \langle \pi(\tilde{K}_{\textbf{b}})u,u\rangle = \sum_{k=1}^{2}\,\left( \Vert \pi(Y^{'}_{k,1})u\Vert^{2} +\Vert \pi(Y_{k,1}^{''})u\Vert^{2} \right),$$
 implies 
\begin{align}
\pi(Y_{j,1}^{'})u=\pi(Y_{j,1}^{''})u=0\,,\quad \forall j=1,2.\label{ég 9}
\end{align}

It remains to consider $ Y_{1,2} $, for which we can notice that
\begin{align*}
Y_{1,2}&=\tilde{K}_{\textbf{b}} +\sum_{k=1}^{2}\, \left( (Y_{k,1}^{'})^{2} +\frac{1}{4} (Y_{k,1}^{''})^{2} -\frac{i}{4}[Y_{k,1}^{'},Z^{''}_{k,1}] \right)\\
&\qquad +i\textbf{b}\left( Y_{1,1}^{'}Y^{''}_{2,1}-Y_{2,1}^{'}Y^{''}_{1,1} \right) .
\end{align*}
Applying $ \pi $ and by action on $ u $, we have
\begin{align*}
\pi(Y_{1,2})u&= \pi(\tilde{K}_{\textbf{b}})u +\sum_{k=1}^{2}\, \pi\left( (Y_{k,1}^{'})^{2} +\frac{1}{4} (Y_{k,1}^{''})^{2} -\frac{i}{4}[Y_{k,1}^{'},Y^{''}_{k,1}] \right)u\\
&\qquad +i\textbf{b}\,\pi\left( Y_{1,1}^{'}Y^{''}_{2,1}-Y_{2,1}^{'}Y^{''}_{1,1} \right)u\\
&=0\, .
\end{align*}
From Remark \ref{rem 2.16} in the stratified case of type 2,  we deduce that
$$ \pi (Y)u=0 \,,\, \forall Y\in \mathcal{G}\,,$$
which implies, $\pi$ being assumed to be non trivial,  $u=0$.\\

The operator $ \pi (\tilde{K}_{\textbf{b}}) $  is therefore injective in the $ \mathcal{S}_{\pi} $ for any irreducible and non-trivial representation $ \pi $. Therefore, according to Theorem \ref{thm type 2} the operator $ \tilde{K}_{\textbf{b}} $ is maximal hypoelliptic in the group $ G $.
By applying Theorem \ref{thm 2.15} with $ K_\textbf{b} = \pi_{\ell, \mathcal{H}} (\tilde{K}_\textbf{b} $), we obtain the existence of $ C> 0 $ such that
\begin{align}\label{ing 11}
\Vert X_{1,2} u\Vert +&\sum_{k=1}^{2} \, \left(\Vert (X_{k,1}^{'})^{2}u\Vert  +\Vert (X_{k,1}^{''})^{2}u\Vert \right) +\sum_{k,\ell=1}^{2} \Vert X^{'}_{k,1}X^{''}_{\ell, 1} u \Vert\notag \\
&  \leq C\, \left( \Vert K_{\textbf{b}}u\Vert+\Vert u \Vert \right)\,,
\end{align}
the previous inequality is verified.
\subsection{Proof of Theorem \ref{hypoelli0}}
\subsubsection{First step.}
We fix $ x_0 \in \mathbb{R}^{2} $ and we write $ B_e (x_0) = \textbf{b}_0 \in \mathbb{R} $. We recall that
$$K=v\cdot \nabla_x -(v\wedge \mathcal{B}_e )\cdot \nabla_v -\Delta_v + v^2/4 -1. $$
We begin with  $u \in \mathcal{S}(\mathbb{R}^{2}\times \mathbb{R}^{2})$ with $ \mathrm{P}_x(\mathrm{Supp}\, u)\subset B(x_0, \varepsilon)$,
where $\mathrm{P}_x$ is the standard projection with respect to $x$ variables
and $\varepsilon>0$ a constant which will be chosen later, we have
$$K u=(K-K_{\textbf{b}_{0}} ) u +K_{\textbf{b}_{0}} u. $$
On the one hand, we have by definition of $K$ and $K_{\textbf{b}_0}$
\begin{align}\label{ing 12}
\Vert (K-K_{\textbf{b}_{0}} )u\Vert &=\Vert (\mathcal{B}_e -\mathcal{B}_e (x_0))(v_1\partial_{v_2} - v_2 \partial_{v_1} ) u \Vert\notag \\
&\leq \Vert \nabla_{x}B_e \Vert_{L^{\infty}}\, \vert x-x_0\vert \, \Vert (v_1\partial_{v_2} - v_2 \partial_{v_1} )u\Vert\\
&\leq  \varepsilon\, \,\Vert \nabla_{x}B_e \Vert_{L^{\infty}}\,  \Vert (v_1\partial_{v_2} - v_2 \partial_{v_1} )u \Vert\notag\,.
\end{align}
We then obtain the following estimate:
$$\Vert (K-K_{\textbf{b}_{0}} )u\Vert \leq \varepsilon\, \Vert B_e \Vert_{\mathrm{Lipsch}(\mathbb{T}^{2})}\,  \Vert u\Vert_{\tilde{B}^2} \,.$$
On the other hand, by the inequality \eqref{ing 11} and Theorem \ref{thm 1.11} at the point $ x_0 $, we obtain that the operator $ K_{\textbf{b}_0} $ is maximal hypoelliptic in $ B (x_0 , \varepsilon) $. So the operator $ K_{\textbf{b}_0} $ verifies, for a constant $ C> 0 $ large enough,  which depends on the coefficients of the polynomial operator of vector fields $K_{\textbf{b}_0}$, the following estimate:
 \begin{align}\label{ing 13}
\Vert K_{\textbf{b}_{0}}u\Vert+\Vert u \Vert&\geq \frac{1}{C}\Vert X_{1,2} u\Vert +\frac{1}{C}\sum_{k=1}^{2} \, \left(\Vert (X_{k,1}^{'})^{2}u\Vert +\Vert (X_{k,1}^{''})^{2}u\Vert \right)+\frac{1}{C}\sum_{k,\ell=1}^{2} \Vert X^{'}_{k,1}X^{''}_{\ell, 1} u \Vert,
\end{align}
Finally, we observe that
\begin{align*}
\Vert K u\Vert \geq \Vert K_{\textbf{b}_{0}} u \Vert - \Vert (K-K_{\textbf{b}_{0}} )u  \Vert\,,
\end{align*}
Using the inequalities \eqref{ing 12} and \eqref{ing 13} and choosing $ 0 <\varepsilon <\displaystyle\frac{1}{C\,\Vert B_e \Vert_{\mathrm{Lipsch}(\mathbb{T}^{2})}} $, we get $ \tilde C> 0 $ such that for all $u \in \mathcal{S}(\mathbb{R}^{2}\times \mathbb{R}^{2})$ with $\mathrm{P}_x(\mathrm{Supp}\, u)\subset B(x_0, \varepsilon)$
\begin{align}\label{ing 14}
\Vert X_{1,2} u\Vert +\sum_{k=1}^{2} \, &\left(\Vert (X_{k,1}^{'})^{2}u\Vert +\Vert (X_{k,1}^{''})^{2}u\Vert \right)+\sum_{k,\ell=1}^{2} \Vert X^{'}_{k,1}X^{''}_{\ell, 1} u \Vert \notag\\&
\qquad \leq \tilde C\, \left( \Vert K u\Vert+\Vert u \Vert \right)\,.
\end{align}
\subsubsection{Second step.}
We have shown that for each $x_0 \in \Bbb T^{2}$ there exists a $B(x_0, \varepsilon(x_0))$ such that \eqref{ing 14} applies. $\Bbb T^{2}$ being compact, then there exists a smooth partition of unity $(\varphi_j)_{j=1}^{n_0}$ in the variable $x\in \Bbb T^2$ such that 
$$\varphi_j \in C_0^\infty (\Bbb T^{2})\, \text{ and } \, \sum_{j=0}^{n_0} \varphi_j^2 = 1 \, \text{ on }\, \Bbb T^{2},$$
 and the inequality \eqref{ing 14} holds on each $\mathrm{Supp} \varphi_j$ (with a uniform constant since the collection of $\varphi_j$ is finite). More precisely,  we take $\varepsilon =\inf \{\varepsilon_j, \, j=1,..,n_0\}>0,$ where 
$\varepsilon_j $ is the radius of the ball centered at a point $ x_j $ whose \eqref{ing 14} holds,  such that 
 $$ \mathrm{Supp} \, \varphi_j \subset B(x_j ,\varepsilon) \subset B(x_j ,\varepsilon_j) \quad \forall j\in \{1,..,n_0\},$$
where the ball $B(x_j,\varepsilon)$ is defined by for all $ j=1,..,n_0$
$$B(x_j, \varepsilon):=\{x\in \mathbb{T}^{2}/\, d(x,x_j)<\varepsilon\}. $$ 
\textbf{Intermediate step.} 
The purpose of this step is to show the following two estimates:
\begin{align}
&\forall \eta>0, \exists C_{ \eta}>0\, \text{ such that } \, \forall u\in \tilde{B}^2 ,\Vert u\Vert_{\tilde{B}^{1}}^2\leq \eta \Vert u\Vert_{\tilde{B}^2}^2 +C_{\eta} \Vert u\Vert^2\,,  \label{est t1} \\
&\forall \varphi\in C^{\infty} (\mathbb{T}^2_{x}), \exists C_{\varphi}>0 \,\text{ such that }\, \Vert [K,\varphi]u\Vert \leq C_{\varphi}\Vert u\Vert_{\tilde{B}^{1}}\,, \forall u\in \tilde{B}^1,\label{est t2}
\end{align}
where $\tilde{B}^1$ is the space $  L_{x}^{2} \widehat \otimes B^{1}_{v} $ with the following Hilbertian norm:
$$ \Vert u\Vert^2_{\tilde{B}^1}:=\sum\limits_{\vert \alpha \vert +\vert \beta \vert \leq 1}\Vert v^\alpha \partial_v^\beta \,u\Vert^2, \, \forall u\in \tilde{B}^1.$$
According to the definition of $ \Vert \cdot \Vert_{B^1_v} $, we have for $u\in \mathcal S(\mathbb T^2\times \mathbb R^2)$ and $x\in \Bbb T^2$
\begin{align*}
\Vert u (x,.)\Vert_{B^1_v}^{2}&=\sum_{\vert \alpha\vert +\vert \beta  \vert \leq 1}\, \Vert v^{\alpha}\partial_{v}^{\beta} u(x,.)\Vert_{L^{2}(\mathbb{R}^2)}^{2} \\
&=\sum_{\vert \alpha\vert +\vert \beta  \vert \leq 1} \langle v^{\alpha}\partial_{v}^{\beta} u(x,.), v^{\alpha}\partial_{v}^{\beta} u(x,.)\rangle\,.
\end{align*}
 By an integration by parts with respect to $v$ and by applying the Cauchy-Schwarz inequality, then there exists a constant $C>0$ such that 
\begin{align*}
\Vert u (x,.)\Vert_{B^1_v}^{2}&\leq C\Vert u(x,.)\Vert \, \Vert u(x,.)\Vert_{B^{2}_v}\,.
\end{align*}
Finally, by the Young inequality we obtain that for all $ \eta> 0 $, there exists $ C_{\eta}> 0 $ such that 
$$ \forall \eta >0, \, \Vert u(x,.)\Vert_{B^{1}_v}^{2} \leq \eta \Vert u(x,.)\Vert_{B^2_v}^{2} +C_{\eta} \Vert u(x,.)\Vert^{2}\,.$$
 Then, by integrating in $ x $ on the torus $ \mathbb{T}^2 $, we deduce the estimate \eqref{est t1}.
~ \par Now, we note that the commutator is
$$[K, \varphi]=[v\cdot\nabla_{x}, \varphi]=v\cdot\nabla_x \varphi\,,$$
according to the previous equality and applying H\"older's inequality ($p=1$ and $q=\infty$), we obtain
\begin{align*}
\Vert [K,\varphi]u\Vert = \Vert v\cdot \nabla_x \varphi u\Vert & \leq \Vert \nabla_x \varphi\Vert_{L^{\infty} (\mathbb{T}^2)}\,\Vert v\, u\Vert_{L^{2}(\mathbb{T}^{2}\times\mathbb{R}^{2})} \\
&\leq \Vert \nabla_x \varphi\Vert_{L^{\infty} (\mathbb{T}^2)}\,\Vert u\Vert_{\tilde{B}^{1}}\\&
\leq C_{\varphi} \Vert u\Vert_{\tilde{B}^1}\,. 
\end{align*}

\textbf{End of the proof.}
Let $ B_e \in \mathrm{Lipsch} (\mathbb{T}^{2}) $ and $  u \in \mathcal S (\mathbb{T}^{2} \times \mathbb{R}^{2}) $. By the partition of unity and by application of the inequality \eqref{ing 14} with $w=u\varphi_j$, we obtain that there exists $ C> 0 $ such that
\begin{align}\label{est t3}
 \Vert u \Vert_{\tilde{B}^{2}}^{2} =\sum_{j=1}^{n_0} \, \Vert \varphi_j u \Vert^{2}_{\tilde{B}^2} \leq C\sum_{j=1}^{n_0}\,\left( \Vert K\varphi_j u \Vert^{2}+\Vert \varphi_j u \Vert^{2} \right), 
\end{align}
Using the definition of the commutators, we get
\begin{align*}
\sum_{j=1}^{n_0}\,\Vert K\varphi_j u \Vert^{2}& \leq 2 \sum_{j=1}^{n_0}\left(\Vert [\varphi_j , K]u\Vert^{2} +\Vert \varphi_j Ku \Vert^2\right)\,.
\end{align*}
According to the estimates \eqref{est t1} and \eqref{est t2}, we obtain, for every $ \eta> 0 $, the existence of $ C_\eta> 0 $ such that 
\begin{align}\label{ing 15}
\sum_{j=1}^{n_0}\,\Vert K\varphi_j u \Vert^{2}\leq 2 \Vert K u\Vert^{2} +2\,\eta \Vert u\Vert_{\tilde{B}^2}^2 +C_{\eta} \Vert u\Vert^2\,.
\end{align}
By inserting inequality \eqref{ing 15} into inequality \eqref{est t3}, we deduce that, for every $ \eta> 0 $, there exists $ C_\eta> 0 $ such that
\begin{align*}
\Vert u \Vert_{\tilde{B}^{2}}^{2}\leq 2 \,C\Vert K u\Vert^{2} +2\,\eta \,C\Vert u\Vert_{\tilde{B}^2}^2 +C_{\eta} \Vert u\Vert^2,
\end{align*}
Then, using the same techniques, we have
\begin{align*}
\Vert v\cdot \nabla_{x} u \Vert^{2}&=\sum_{j=1}^{n_0}\, \Vert \varphi_j (v\cdot \nabla_{x}u) \Vert^{2} \\
&  \leq 2 \sum_{j=1}^{n_0}\, \left( \Vert [\varphi_j, v\cdot\nabla_{x}]u\Vert^{2} + \Vert (v\cdot \nabla_{x})(\varphi_j u )\Vert^{2}\right)
\end{align*}

Using the inequalities \eqref{ing 15} and \eqref{ing 14}, we get $ C'> 0 $ and for all $ \eta'> 0 $, $ C_{\eta'}>0$ such that
\begin{align*}
\Vert v\cdot\nabla_{x} u \Vert^{2} &\leq \sum_{j=1}^{n_0} \left( C'\Vert \varphi_j \, Ku \Vert^{2} +\eta'\Vert \varphi_j u \Vert_{\tilde{B}^2}^{2} +C_{\eta'}\Vert\varphi_j u \Vert^{2}\right)\\
&\leq C' \Vert Ku\Vert^{2} +\eta'\Vert u \Vert^{2}_{\tilde{B}^2}+ C_{\eta'}\Vert u \Vert^{2}.
\end{align*}
It remains to estimate the following term:
\begin{align*}
\Vert B_e ( v_1 \partial_{v_2} - v_2 \partial_{v_1}) u\Vert^{2} =\sum_{j=1}^{n_0}\,\Vert B_e ( v_1 \partial_{v_2} - v_2 \partial_{v_1})(\varphi_j u )\Vert^{2}.
\end{align*}
By direct application of the inequalities \eqref{ing 14} and \eqref{ing 15}, we obtain that there exists $ C, C''> 0 $ and for all $ \eta''> 0 $, $ C_{\eta''}> 0 $ such that 
 \begin{align*}
\Vert B_e ( v_1 \partial_{v_2} - v_2 \partial_{v_1})u\Vert^{2}&\leq C\sum_{j=1}^{n_0} \left(\Vert K(\varphi_j u )\Vert^{2}+\Vert u \Vert^{2} \right)\\
&\leq C'' \Vert Ku  \Vert^{2}+\eta''\Vert u \Vert^{2}_{\tilde{B}^2} + C_{\eta''}\Vert u \Vert^{2}.
\end{align*}
Choosing $ \eta $, $ \eta' $ and $ \eta''> 0 $ such that
$ 2C\,\eta + \eta' + \eta'' <1 $, we thus obtain the existence of a constant $ \tilde{C}> 0 $ such that 
\begin{align*}
\Vert (v\cdot \nabla_{v} -B_e ( v_1 \partial_{v_2} - v_2 \partial_{v_1}))u\Vert + \Vert u \Vert_{\tilde{B}^2} \leq \tilde{C}(\Vert K u \Vert +\Vert u \Vert ), \quad \forall u \in  \mathcal S(\mathbb{T}^{2}\times \mathbb{R}^{2}).
\end{align*}
\section{Proof of Theorem \ref{hypoelli0} when $d=3$}
\subsection{Preliminaries}
We now present the proof of Theorem \ref{hypoelli0} when $ d = 3 $, and  explain the differences with the proof in the case $d=2$. We first replace $ B_e (x) $ with a constant vector $ \textbf{b}= (b_1, b_2, b_3) \in \mathbb {R}^{3} $ fixed, and we show an overall estimate for the following model:
\begin{align}\label{f-p-fixe}
K_{\textbf{b}} = v\cdot \nabla_{x} +(v\wedge \textbf{b} )\cdot \nabla_{v} -\Delta_v + v^{2}/4 -3/2.
\end{align}
We define the transformation
$$V_{M}:\mathbb{R}^{3}\times\mathbb{R}^{3}\to \mathbb{R}^{3}\times \mathbb{R}^{3}, \quad 
(x,v)\mapsto V_{M}(x,v):=(Mx,( M^{-1})^{t}v),$$
where $ M $ is the rotation matrix that is obtained by multiplying the two matrices of rotations $ R ({\theta_1}) $ and $ R (\theta_2) $ around the axes $ (Ox) $ and $ (Oz) $ with rotation angles $ \theta_1 $ and $ \theta_2 $ respectively
\begin{align*}
R(\theta_1):=\begin{pmatrix}
1 &0&0\\
0&\cos \theta_1&-\sin \theta_1\\
0&\sin \theta_1 &\cos \theta_1
\end{pmatrix} \quad \quad 
R(\theta_2):=\begin{pmatrix}
\cos \theta_2 &-\sin \theta_2 &0\\
\sin \theta_2 &\cos\theta_2 &0\\
0&0&1
\end{pmatrix}
\end{align*}
where both angles are defined by
$$\theta_1 =\arctan (b_{1}\,/\,b_{2})\quad,\quad \theta_2 =\arctan (\sqrt{b_{1}^{2}+b_{2}^{2}}\,/\,b_3). $$
We note that the following differential operators are invariant by conjugation by the transformation $ V_{M} $ (by orthogonality of matrix $M$ that is to say $M^{-1}=M^{t}$):
\begin{align*}
V_{M} \,(v\cdot \nabla_{x})\, V_{M^{-1}} =(M^{-1})^{t}\,v\cdot (M^{-1})^{t} \nabla_{x} =(M^{-1})^{t}\,M^{-1}\,v\cdot \nabla_{x}=v\cdot\nabla_{x}\,
\end{align*}
\begin{align*}
V_{M} \, (-\Delta_{v} + v^{2}/4 - 3/2)\,V_{M^{-1}}&=V_{M}\,(-\nabla_{v}+v/2)\cdot(\nabla_{v}+v/2)\,V_{M^{-1}}\\
&=(M^{-1})^{t}\,(-\nabla_{v}+v/2)\cdot M^{t} \,(-\nabla_{v}+v/2)\\
&=M\,M^{t}\,\left(-\Delta_{v}+v^{2}/4 -3/2\right)\\
&=-\Delta_{v}+v^{2}/4 -3/2.
\end{align*}
By construction of the matrix $ M $, where $\vert \textbf{b} \vert$ is the Euclidiean norm of $\textbf{b}\in \mathbb{R}^3$,  conjugation of the magnetic operator gives
$$V_{M}(v\wedge \textbf{b})\cdot\nabla_{v} \,V_{M^{-1}}= \vert \textbf{b}\vert (v_1 \partial_{v_2} -v_2 \partial_{v_1}). $$
Therefore, the conjugation of the Fokker-Planck-magnetic operator defined in \eqref{f-p-fixe}, by the canonical transformation operator $ V_{M} $, gives us the following operator:
\begin{align}\label{op trans}
Q_{\vert \textbf{b}\vert}:= V_{M}\,K_{\textbf{b}}\,V_{M^{-1}}=v\cdot \nabla_x + \vert \textbf{b}\vert \,(v_{1}\partial_{v_2} -v_2 \partial_{v_1 })-\Delta_v + v^{2}/4 -3/2,
\end{align}
~\par Note that the space $ \tilde{B}^{2} $ is invariant by rotation. The maximal estimates for $ K_\textbf{b} $ are therefore equivalent to the maximal estimates for $ Q_{\vert \textbf{b} \vert} $ (with uniform control of constants). 

As in the case $ d = 2 $, the proof consists in constructing a graded Lie algebra $ \mathcal{G} $ of rank $ 3 $, and, in every point $ x \in \mathbb{T}^{3} $, an element $ \mathcal{Q}_{\vert \textbf{b} \vert} $ of $ \mathcal{U}_{2} (\mathcal{G}) $ hypoelliptic with $\textbf{b}=B_e(x)$. We deduce from the maximal estimate obtained for each $ \mathcal{Q}_{\vert \textbf{b} \vert} $, a maximal estimate for the operator $ Q_{\vert \textbf{b} \vert} $.

By conjugation by the canonical transformation $ V_{M^{- 1}} $ of the operator $ Q_{\vert \textbf{b} \vert} $, we then have
\begin{align}\label{ing 38}
&\Vert X_{1,2} u\Vert +\sum_{k=1}^{3} \, \left(\Vert (X_{k,1}^{'})^{2}u\Vert +\Vert (X_{k,1}^{''})^{2}u\Vert\right)+\sum_{k,\ell =1}^{3}\, \Vert X_{k,1}^{'}X_{\ell,1}^{''}u\Vert\notag \\
&\qquad \qquad \leq C\left( \Vert K_{\textbf{b}}u\Vert+\Vert u \Vert \right),\quad \forall u \in S(\mathbb{R}^{3}\times \mathbb{R}^{3}).
\end{align} 
\subsection{End of the proof}
For the rest of the proof, there is no difference with the case ~ $ d = 2 $, we use a partition of unity and we control the error, in order to obtain the maximal estimate \eqref{hypomax} for the initial operator $ K $ in the space $ \mathcal{S} (\mathbb{T}^{3} \times \mathbb{R}^{3}) $. For more details on a similar result without a magnetic field but with an electric potential, we refer the reader to chapter 9 of the book of B. ~Helffer and F.~Nier \cite{nier2005hypoelliptic}.
\appendix
\section{Maximal accretivity of magnetic Fokker-Planck operator with low regularity} 
\subsection{Preliminary remark}
By following the steps of the proof of the maximal accretivity for Kramers-Fokker-Planck operator without a magnetic field, given in \cite[Proposition 5.5]{nier2005hypoelliptic}, we need some local hypoelliptic regularity of the following operator:
$$
  v\cdot \nabla_x -(v\wedge B_e )\cdot \nabla_v -\Delta_v =Y_0+\sum_{j=1}^{d}Y_j^2\,,
  $$
  where 
   $$ Y_0= v\cdot \nabla_x -(v\wedge B_e )\cdot \nabla_v \, \text{ and }\,  Y_j=\partial_{v_j}\,,\forall j=1,..,d\,. $$
In the case where $ B_e $ is $ C^\infty $, this regularity results immediately from the hypoellipticity argument of H\"ormander operators.
   The difficulty in our weakly regular case is that the vector field $ Y_0 $ has coefficients in $ L^\infty $. We can only hope for a weaker Sobolev type of regularity.
\subsection{Review of Sobolev regularity for H\"ormander operators of type-2} 
In this part, we will recall a result of Sobolev regularity for a relevant class of differential operators. 
We consider the differential operator of type-2 of H\"ormander given by
\begin{align*}
\mathcal{L}=\sum_{j=1}^{n}\,X_j^2 +X_0\,,
\end{align*}
where the vector fields $ X_0, .., X_n $ are real and $ C^\infty $ on an open set $ \Omega \subset \mathbb{R}^{n} $. We suppose further that the $ X_j $ with $ j = 0,1, .., n $ satisfy H\"ormander condition~\ref{ho}.
 The hypoellipticity of these operators has been studied by L.~H\"ormander in \cite{hormander1967hypoelliptic}. 

We denote by $ \mathcal{H}^m_{loc} (\mathbb{R}^{2d}) $ the space of functions locally in $ \mathcal{H}^m (\Bbb R^{2d})$ defined in \eqref{espace_bis}\,.
 (See \cite{rothschild1976hypoelliptic} for more details of this subject).  We recall from Rothschild-Stein in \cite{rothschild1976hypoelliptic} the following result.

\begin{thm}[Theorem 18 in \cite{rothschild1976hypoelliptic}]\label{thm_transitoire}
If $f, \mathcal{L}\,f\in L^{2}_{loc}(\Omega)$ then $ f\in \mathcal{H}^2_{loc}(\Omega)$\,.
\end{thm}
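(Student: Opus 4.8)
The plan is to obtain Theorem~\ref{thm_transitoire} from the local a priori maximal estimate
\begin{align}\label{plan-apriori}
\|u\|_{L^2}+\sum_{j=1}^{n}\|X_ju\|_{L^2}+\sum_{j,k=1}^{n}\|X_jX_ku\|_{L^2}\le C\bigl(\|\mathcal{L}u\|_{L^2}+\|u\|_{L^2}\bigr),\qquad u\in C_0^{\infty}(\omega),
\end{align}
for a suitable neighbourhood $\omega$ of any given point of $\Omega$; this is exactly the statement that $\mathcal{L}$ is maximal hypoelliptic of order $m=2$ in $\omega$, since $\mathcal{H}^2_{loc}(\Omega)$ is by definition the space of $f\in L^2_{loc}$ with $X_jf,\,X_jX_kf\in L^2_{loc}$ for $1\le j,k\le n$. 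Granting~\eqref{plan-apriori}, the regularity statement follows by a Friedrichs mollifier argument: one regularises $f$, applies~\eqref{plan-apriori} to the mollified functions, and controls the commutators of $\mathcal{L}$ with the mollifier using that $\mathcal{L}$ is second order with $C^{\infty}$ coefficients; letting the parameter tend to $0$ gives $X_jf,\,X_jX_kf\in L^2_{loc}(\Omega)$, hence $f\in\mathcal{H}^2_{loc}(\Omega)$ (and, as a bonus, $X_0f=\mathcal{L}f-\sum_{j=1}^{n}X_j^2f\in L^2_{loc}(\Omega)$ as well).

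To prove~\eqref{plan-apriori} I would follow the nilpotent approach used in Sections~3--4. Since $\mathcal{L}=\sum_{j=1}^{n}X_j^2+X_0$ is homogeneous of weighted order $2$ (each $X_j^2$ and $X_0$ has weight $2$), it coincides with its own principal part, so the operator $\mathcal{L}_{z_0}$ frozen at a point $z_0$ belongs to $\mathcal{U}_2(\mathcal{G})$, where $\mathcal{G}$ is the free graded nilpotent Lie algebra on $n+1$ generators $(Z_0,Z_1,\dots,Z_n)$, stratified of type $2$ with $Z_0\in\mathcal{G}_2$, of rank $r$ equal to the step in the H\"ormander condition~\ref{ho}. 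By Theorem~\ref{thm 1.11} it then suffices to verify Rockland's criterion for $\mathcal{L}_{z_0}=\sum_{j=1}^{n}Z_j^2+Z_0$: for every non-trivial irreducible unitary representation $\pi$ of $\exp\mathcal{G}$, the operator $\pi(\mathcal{L}_{z_0})$ is injective on $\mathcal{S}_{\pi}$. This is the same computation as in Subsection~3.2: if $\pi(\mathcal{L}_{z_0})u=0$ then $\mathrm{Re}\,\langle\pi(\mathcal{L}_{z_0})u,u\rangle=0$; since $\pi(Z_0)$ is formally skew-adjoint (Proposition~\ref{prop anti}) one has $\mathrm{Re}\,\langle\pi(Z_0)u,u\rangle=0$, whence $\sum_{j=1}^{n}\|\pi(Z_j)u\|^2=0$, i.e.\ $\pi(Z_j)u=0$ for $j=1,\dots,n$; feeding this back gives $\pi(Z_0)u=\pi(\mathcal{L}_{z_0})u-\sum_{j=1}^{n}\pi(Z_j)^2u=0$, so $\pi(Z)u=0$ for all $Z\in\mathcal{G}$ by Remark~\ref{rem 2.16} in the type-2 case, and therefore $u=0$ since $\pi$ is non-trivial. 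Thus $\mathcal{L}_{z_0}$ satisfies Rockland's criterion and Theorem~\ref{thm 1.11} yields~\eqref{plan-apriori} near every point of $\Omega$.

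The real content — packaged here inside the appeal to Theorem~\ref{thm 1.11}, which is itself posterior to the result being proven — is the Rothschild--Stein machinery showing that a Rockland-type condition on the frozen model entails the maximal estimate for the variable-coefficient operator. Self-contained, this means: lift $(X_0,\dots,X_n)$, with $X_0$ of weight $2$, to vector fields on $\Omega\times\mathbb{R}^m$ that are free up to step $r$; approximate them in privileged coordinates by the left-invariant generators of $\mathcal{G}$; construct on the homogeneous group $G=\exp\mathcal{G}$ a fundamental solution of the model operator $\mathcal{L}_0=\sum_{j=1}^{n}Y_j^2+Y_0$, homogeneous of degree $2-Q$ with $Q$ the homogeneous dimension, verifying that its second left-invariant derivatives are kernels of type $0$ and hence $L^2$-bounded; and finally pull the resulting parametrix back, absorbing the lower-order errors coming from the approximation by a bootstrap, then descend from $\Omega\times\mathbb{R}^m$ to $\Omega$ by applying the estimate to functions independent of the lifting variables. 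I would expect the construction of the fundamental solution of $\mathcal{L}_0$ — which is \emph{not} self-adjoint, so one has to argue simultaneously with $\mathcal{L}_0$ and its transpose — and the subsequent transfer-with-error-absorption to be the genuine obstacle; the lifting, the choice of privileged coordinates, the descent, and the mollifier bookkeeping are routine by comparison.
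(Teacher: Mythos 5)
The paper does not prove this statement at all: it is imported verbatim as Theorem 18 of Rothschild--Stein \cite{rothschild1976hypoelliptic}, so there is no internal proof to compare with, and what you are really attempting is a re-proof of that theorem. Your verification of Rockland's criterion for the frozen operator $\sum_j Z_j^2+Z_0$ and the appeal to Theorem \ref{thm 1.11} to get the local maximal a priori estimate on $C_0^\infty$ functions is fine as far as it goes (modulo the provenance issue you yourself flag: Theorem \ref{thm 1.11} is built on the very Rothschild--Stein lifting/parametrix machinery of \cite{rothschild1976hypoelliptic}, so this route does not actually bypass it).

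The genuine gap is the first step, which you treat as routine: passing from the a priori estimate for $u\in C_0^\infty(\omega)$ to $\mathcal H^2_{loc}$ regularity of an arbitrary $f\in L^2_{loc}$ solution ``by a Friedrichs mollifier argument.'' Friedrichs' lemma controls commutators of a mollifier with \emph{first-order} operators on $L^2$; it does control $[X_0,\chi J_\varepsilon]f$, but not the second-order part. Writing $[X_j^2,\chi J_\varepsilon]f=X_j\bigl([X_j,\chi J_\varepsilon]f\bigr)+[X_j,\chi J_\varepsilon](X_jf)$, the first term is only bounded in $H^{-1}$ (one applies $X_j$ to something merely bounded in $L^2$) and the second requires $X_jf\in L^2_{loc}$, which is precisely what is not yet known. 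Hence $\Vert \mathcal L(\chi J_\varepsilon f)\Vert_{L^2}$ is not uniformly bounded in $\varepsilon$ knowing only $f,\mathcal Lf\in L^2_{loc}$, and the estimate cannot be applied to the regularizations as claimed; an integration-by-parts variant of the argument only yields the first-order information $X_jf\in L^2_{loc}$, not the full $\mathcal H^2_{loc}$ conclusion. This is exactly why Rothschild--Stein prove Theorem 18 through the parametrix itself (second left-invariant derivatives of the kernel are of type $0$, hence $L^2$-bounded, giving the regularity directly), and why the present paper's appendix, facing the analogous difficulty with rough coefficients, runs the careful $(1-\delta^2\Delta_{x,v})^{-1}$ regularization with itemized commutators in Proposition \ref{lem1} to gain only $\nabla_v h\in L^2_{loc}$, and then quotes Theorem \ref{thm_transitoire} for the remaining jump to $\mathcal H^2_{loc}$. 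To close your argument you would either have to carry out the parametrix/type-$0$ kernel analysis you relegate to the last paragraph (at which point the mollifier step is superfluous), or replace the naive mollification by a regularization and bootstrap adapted to the weighted structure, proving first-order bounds before second-order ones.
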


This theorem is proven under the assumption that the polynomial operators of vector fields are with  $ C^\infty $ coefficients, a hypothesis that will not be satisfied in our case.  The case with weakly
 regular coefficients appears less often in the literature.
We note, for example, that the case of the operator of the following form:
$$ \sum_{i,j=1}^{n}\,a_{i,j}\,X_i\,X_j +X_0 \,,$$
with non-regular coefficients $ a_{i, j} $ was studied by C.J.~Xu in \cite{Xu} and M.~Bramanti and L.~Brandolini in \cite{brandolini2000lp}, who prove  results of regularity as in H\"older and Sobolev spaces. Readers are referred to the article \cite{holder} for the study of H\"older regularity for the particular case of Kolmogorov operators with H\"older coefficients. None of these theorems apply directly. Moreover, they require a hypothesis of H\"older regularity which will not made here.
\subsection{Proof of the Sobolev regularity}
To prove Theorem \ref{prop 2}, we will show the Sobolev regularity associated to the following problem
$$
 K^*f=g \mbox{  with } f,g\in L^2_{loc}(\mathbb{R}^{2d})\,,$$ where $K^*$ is the formal adjoint of $ K $:
\begin{align}
 K^*=-v\cdot\nabla_x-\Delta_v+(v\wedge B_e)\cdot \nabla_v +v^2/4-d/2\,.\label{def_K*}
 \end{align}
The result of Sobolev regularity is the following:
\begin{thm}\label{thm_faible}
Let $d=2$ or $3$. We suppose that $B_e\in L^{\infty} (\mathbb{R}^{d}, \mathbb{R}^{d(d-1)/2})$. Then for all $f\in L^{2}_{loc} (\mathbb{R}^{2d})$, such that $ K^* f=g$ with $g\in L^{2}_{loc} (\mathbb{R}^{2d})$, then $ f\in \mathcal{H}^{2}_{loc}(\mathbb{R}^{2d})$\,. 
\end{thm}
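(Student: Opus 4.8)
\emph{Proof proposal.} The plan is to strip off the magnetic field by regarding it as a bounded, formally skew-adjoint, purely-velocity perturbation of the smooth Kolmogorov operator, and then to run a two-step argument: a Caccioppoli-type energy estimate that gains one velocity derivative, followed by a direct appeal to Theorem \ref{thm_transitoire}. Concretely, I would rewrite $K^\ast f=g$ as $\mathcal L_0 f=h$, where
$$\mathcal L_0 := \Delta_v + v\cdot\nabla_x = \sum_{j=1}^d (\partial_{v_j})^2 + v\cdot\nabla_x$$
is smooth and of H\"ormander type $2$ — the brackets $[\partial_{v_j},v\cdot\nabla_x]=\partial_{x_j}$ restore Condition \ref{ho} with $r=2$ — and
$$h := (v\wedge B_e)\cdot\nabla_v f + (v^2/4 - d/2)\,f - g.$$
The only non-smooth object is $W := (v\wedge B_e)\cdot\nabla_v$, which for each fixed $x$ is a divergence-free (hence formally skew-adjoint on $L^2(\mathbb R^d_v)$) first-order field, linear in $v$ and merely $L^\infty$ in $x$. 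By Theorem \ref{thm_transitoire} it would suffice to prove $h\in L^2_{loc}$, and this will follow once we show that $f$ lies locally in $H^1$ with respect to the velocity variable.

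To obtain that gain I would test the equation against $\phi^2 f$ with a product cut-off $\phi=\chi(x)\psi(v)\in C_0^\infty$, after regularizing $f$ by Friedrichs mollifiers in $(x,v)$ so that every integration by parts is licit and the mollifier commutators stay bounded uniformly in the regularization parameter (a routine Friedrichs-lemma computation, the coefficients of $v\cdot\nabla_x$, $\Delta_v$ and $W$ being affine). Taking real parts, $\mathrm{Re}\langle -\Delta_v f,\phi^2 f\rangle = \|\phi\nabla_v f\|^2 - \tfrac12\int |f|^2\,\Delta_v(\phi^2)$ and $\mathrm{Re}\langle -v\cdot\nabla_x f,\phi^2 f\rangle = \tfrac12\int|f|^2\,v\cdot\nabla_x(\phi^2)$, while the skew-symmetry of $W$ together with the fact that $W$ differentiates only in $v$ gives $2\,\mathrm{Re}\langle Wf,\phi^2 f\rangle = -\int (W\phi^2)\,|f|^2$ with $W\phi^2=\chi^2\,(v\wedge B_e)\cdot\nabla_v(\psi^2)$ bounded on $\mathrm{supp}\,\phi$ because there $v$ is bounded and $B_e\in L^\infty$ — no derivative of $f$ survives in that term. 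Since $v^2/4-d/2$ is also bounded on $\mathrm{supp}\,\phi$, summing the contributions against $\mathrm{Re}\langle g,\phi^2 f\rangle$ and absorbing yields
$$\|\phi\nabla_v f\|^2 \;\le\; C\big(\|f\|_{L^2(\mathrm{supp}\,\phi)}^2 + \|g\|_{L^2(\mathrm{supp}\,\phi)}^2\big),$$
and letting $\phi\equiv 1$ on an arbitrary compact gives $\nabla_v f\in L^2_{loc}(\mathbb R^{2d})$.

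With $\nabla_v f\in L^2_{loc}$ the term $Wf=(v\wedge B_e)\cdot\nabla_v f$ lies in $L^2_{loc}$ (its coefficients are locally bounded), hence $h\in L^2_{loc}$; applying Theorem \ref{thm_transitoire} to $\mathcal L_0 f=h$ gives $f\in\mathcal H^2_{loc}(\mathbb R^{2d})$, which is the claim. The main obstacle — indeed the only delicate point — is the rigorous justification of the energy identity when $f$ is only $L^2_{loc}$ and $\mathcal L_0 f$ is a priori just a distribution: one must mollify, bound the commutators of the (smooth) coefficients against the mollifier uniformly via Friedrichs' lemma, and tame the unbounded weights $v$ and $v^2$ through the velocity cut-off before passing to the limit. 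An essentially equivalent but notationally heavier route is to bootstrap in the anisotropic Sobolev scale attached to $\mathcal L_0$, using that multiplication by $B_e(x)\in L^\infty$ commutes with $\nabla_v$ and hence costs nothing in spaces that involve only velocity derivatives of order $\le 1$.
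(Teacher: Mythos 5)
Your overall architecture is the same as the paper's: first gain $\nabla_v f\in L^2_{loc}$ by an energy estimate that exploits $\partial_{v_j}\big((v\wedge B_e)_j\big)=0$ so that the magnetic term never produces a derivative of $f$, then move the magnetic and potential terms to the right-hand side and invoke Theorem \ref{thm_transitoire} for the smooth Kolmogorov operator. The gap is exactly at the point you label ``the only delicate point'' and then dismiss as a routine Friedrichs-lemma computation ``the coefficients being affine''. The coefficients of $W=(v\wedge B_e(x))\cdot\nabla_v$ are \emph{not} affine or even continuous: they contain $B_e\in L^\infty$ only. If you mollify jointly in $(x,v)$, the relevant commutator is
\begin{align*}
[W,\rho_\epsilon\ast]f(x,v)=\sum_j\int \partial_{v_j}\rho_\epsilon\big((x,v)-(x',v')\big)\,\Big(\big((v-v')\wedge B_e(x)\big)_j+\big(v'\wedge (B_e(x)-B_e(x'))\big)_j\Big)f(x',v')\,dx'dv',
\end{align*}
and while the first piece is uniformly bounded (Lipschitz in $v$), the second is only $O(\epsilon^{-1})\Vert f\Vert_{L^2_{loc}}$, because $B_e(x)-B_e(x')$ does not become small for merely bounded $B_e$; Friedrichs' lemma needs Lipschitz regularity in every mollified variable. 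Mollifying in $v$ alone kills that term but then $[v\cdot\nabla_x,\rho^v_\epsilon\ast]f$ is only $\partial_{x_j}$ of an $O(\epsilon)$ function in $L^2$, and pairing it with $\phi^2 f_\epsilon$ requires $x$-derivatives of $f_\epsilon$ that you do not control. So the justification of your energy identity, as written, fails.

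This is precisely the difficulty the paper's Proposition \ref{lem1} is built to circumvent: it regularizes with $h_\delta=\chi_2(1-\delta^2\Delta_{x,v})^{-1}\chi_1 h$, commutes the regularizer only with the smooth Kolmogorov part (the identities \eqref{com1}--\eqref{com3}, where $\delta^2\,\partial_{v_j}\partial_{x_j}$ sandwiched between two resolvents is uniformly bounded by \eqref{eq:remutile}), and \emph{never} commutes anything past $B_e$: the rough term is rewritten as $\partial_{v_j}(c_j h_j)$ using $\partial_{v_j}c_j=0$, the derivative is pulled through the Fourier multiplier and integrated by parts onto $h_\delta$, producing $\Vert c_j h_j\Vert_{L^2(B)}\,\Vert\partial_{v_j}h_\delta\Vert$, which is absorbed into $\Vert\nabla_v h_\delta\Vert^2$; weak compactness then gives $\nabla_v f\in L^2_{loc}$. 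Your argument can be repaired in the same spirit (treat the magnetic term in the mollified equation without ever forming a commutator with $B_e$, exactly as above), after which your second step --- $h\in L^2_{loc}$ and Theorem \ref{thm_transitoire} applied to $\mathcal L_0 f=h$ --- coincides with Step 2 of the paper's proof and is fine. But the uniform commutator bound you assert is false as stated, so the proof as proposed is incomplete.
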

Before proving Theorem \ref{thm_faible}, it is important to reduce our problem  to a problem with regular coefficients, in order to prove partial regularity in $ v $ for the following family of operators:
\begin{prop} \label{lem1}
Let $c_j\in L^{\infty}(\mathbb{R}^{d},L^{2}_{loc}(\mathbb{R}^d)),\,\forall j=1,...,d$ such that
\begin{align}\label{cond1}
\partial_{v_j}\,(c_j(x,v))=0\,\text{ in } \mathcal{D}'(\mathbb{R}^{2d})\,,\forall j=1,..,d\,.
\end{align}
Let $P_0$ be the Kolmogorov operator
\begin{align}\label{def:P0}
P_0:= -v\cdot \nabla_x -\Delta_v\,.
\end{align}
 If $h \in L^{2}_{loc}(\mathbb{R}^{2d})$ satisfies
\begin{equation}\label{prob1}
\begin{cases}
P_0h= \sum\limits_{j=1}^{d} c_{j}(x,v)\,\partial_{v_j}\,h_j+\tilde{g}\\
h_j, \tilde{g}\in L^{2}_{loc}(\mathbb{R}^{2d}),\,\forall j=1,...,d \,,
\end{cases}
\end{equation}
 then $ \nabla_v \, h \in L^{2}_{loc} (\mathbb{R}^{2d},\mathbb R^d)\,.$
\end{prop}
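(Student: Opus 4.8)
The plan is to use the single structural hypothesis \eqref{cond1}, i.e. $\partial_{v_j}c_j=0$, to put the right-hand side of \eqref{prob1} into divergence form in the velocity variable, and then to run a localized energy estimate for the Kolmogorov operator $P_0$ of \eqref{def:P0} in which the ellipticity of $-\Delta_v$ in $v$ yields control of $\nabla_v h$ while the transport part $-v\cdot\nabla_x$ is harmless because it is skew-symmetric. Since $\partial_{v_j}c_j=0$ in $\mathcal D'(\mathbb R^{2d})$, one has $c_j(x,v)\,\partial_{v_j}h_j=\partial_{v_j}\!\big(c_j(x,v)\,h_j\big)$, so \eqref{prob1} reads $P_0h=\sum_{j}\partial_{v_j}G_j+\tilde g$ with $G_j:=c_jh_j$. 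In the setting where this proposition is applied — the coefficients $c_j$ coming from $(v\wedge B_e)\cdot\nabla_v$ are polynomial in $v$ with $L^\infty$ coefficients in $x$, hence bounded on compacts — each $G_j$ lies in $L^2_{loc}(\mathbb R^{2d})$, so the data is to be treated as an element of $L^2_{loc}+\sum_j\partial_{v_j}L^2_{loc}$.

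The core a priori estimate is obtained by pairing the equation against $\chi^2 h$ for cut-offs $\chi\prec\chi_1$ in $C_c^\infty(\mathbb R^{2d})$. The term $\langle -\Delta_v h,\chi^2 h\rangle$ produces $\Vert\chi\nabla_v h\Vert^2$ up to the commutator $\langle \nabla_v h,(\nabla_v\chi^2)h\rangle$, absorbed by Young's inequality. The term $\langle -v\cdot\nabla_x h,\chi^2 h\rangle$ integrates by parts in $x$ (using that $v$ is constant in $x$, so $\nabla_x\cdot(v\,\cdot)=v\cdot\nabla_x$) to $-\langle\chi\,(v\cdot\nabla_x\chi)\,h,h\rangle$, which is bounded by $C\Vert h\Vert^2_{L^2(\mathrm{supp}\,\chi_1)}$ because $|v|$ is bounded on $\mathrm{supp}\,\chi_1$. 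On the right-hand side,
\[
\Big\langle \textstyle\sum_j\partial_{v_j}G_j+\tilde g,\ \chi^2 h\Big\rangle=-\sum_j\langle G_j,\chi^2\partial_{v_j}h\rangle-\sum_j\langle G_j,2\chi(\partial_{v_j}\chi)h\rangle+\langle\tilde g,\chi^2 h\rangle ,
\]
whose first sum is again absorbed into $\Vert\chi\nabla_v h\Vert^2$ and whose remaining terms are controlled by $\sum_j\Vert G_j\Vert_{L^2(\mathrm{supp}\,\chi_1)}$, $\Vert \tilde g\Vert_{L^2(\mathrm{supp}\,\chi_1)}$ and $\Vert h\Vert_{L^2(\mathrm{supp}\,\chi_1)}$. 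This gives $\Vert \nabla_v h\Vert_{L^2(\mathrm{supp}\,\chi)}<\infty$, hence $\nabla_v h\in L^2_{loc}(\mathbb R^{2d},\mathbb R^d)$, which is the claim. (An equivalent route is a duality argument: $P_0$ has smooth coefficients and is a H\"ormander operator of type $2$, so the Rothschild--Stein regularity theory behind Theorem~\ref{thm_transitoire}, applied at the level of the dual anisotropic space $\mathcal H^{-1}$ of \eqref{espace_bis}, promotes $P_0h\in\mathcal H^{-1}_{loc}$ together with $h\in L^2_{loc}$ to $h\in\mathcal H^1_{loc}$.)

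The point requiring care — and the main obstacle — is that the manipulations above are only formal as written: a priori $h$ is merely in $L^2_{loc}$, so $\nabla_v h$ and $v\cdot\nabla_x h$ are just distributions and the integrations by parts and absorption must be justified. No regularization of the operator is needed since $P_0$ is smooth; instead one regularizes $h$, and the delicate feature is that mollification in $v$ (or velocity difference quotients) does not commute with the transport field: if $\delta^\eta_{v_j}u(x,v)=u(x,v+\eta e_j)-u(x,v)$ then $\delta^\eta_{v_j}(v\cdot\nabla_x u)=v\cdot\nabla_x\,\delta^\eta_{v_j}u+\eta\,\partial_{x_j}u(x,v+\eta e_j)$, and the spurious $x$-derivative must be tamed by the parabolic Kolmogorov scaling (regularizing simultaneously at scale $\eta$ in $v$ and $\eta^3$ in $x$), which is precisely the mechanism underlying the subelliptic estimate for $P_0$. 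The clean way to organize this is to prove the a priori estimate first with $h$ replaced by its regularization, keeping the commutator between the regularization and multiplication by $c_j$ under control by a Friedrichs-type lemma — here made painless by \eqref{cond1}, since $c_j$ is independent of $v_j$ — and then pass to the limit using the uniform bound and weak compactness of $\nabla_v h^\varepsilon$ in $L^2_{loc}$. One should also record that the argument genuinely uses $G_j=c_jh_j\in L^2_{loc}$, which is where the boundedness on compacts of the coefficients arising from the magnetic term enters.
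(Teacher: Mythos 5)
Your overall architecture coincides with the paper's: use \eqref{cond1} to put the right-hand side of \eqref{prob1} in $v$-divergence form, run a localized $L^2$ energy estimate in which $-\Delta_v$ produces $\Vert\nabla_v h\Vert^2$ while the transport part of $P_0$ contributes only cut-off errors by skew-symmetry, then regularize $h$, prove the estimate uniformly in the regularization parameter, and conclude by weak compactness. Your a priori estimate is the paper's first step, and your limiting argument is its final weak-compactness step; your remark that one genuinely uses $c_jh_j\in L^2_{loc}$ is consistent with the paper, whose estimates also involve $\Vert c_jh_j\Vert_{L^2}$.

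The genuine problem sits exactly at the point you flag as the crux: the commutator between the regularization and the field $v\cdot\nabla_x$. Your prescription --- regularize at scale $\eta$ in $v$ and $\eta^3$ in $x$ (``parabolic Kolmogorov scaling'') --- goes in the wrong direction and does not close. For a mollifier acting at scale $\eta$ in $v$ and $\mu$ in $x$, the commutator $[v\cdot\nabla_x,\,\rho\ast]$ has kernel $(v-w)\cdot(\nabla_x\rho)(x-y,v-w)$, whose $L^1$ norm is of order $\eta/\mu$; with $\mu=\eta^3$ this is $\eta^{-2}$. Equivalently, the spurious term $\eta\,\partial_{x_j}u(x,v+\eta e_j)$ you exhibit costs $\eta\cdot\eta^{-3}$ once the $x$-derivative falls on an object regularized at scale $\eta^3$, and nothing controls it since $h$ is only in $L^2_{loc}$. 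What closes the argument is the opposite requirement: regularize no finer in $x$ than in $v$. With an isotropic mollifier the commutator is uniformly bounded (and vanishes in the limit) by the Friedrichs lemma, because the coefficient $v$ of the transport field is Lipschitz in the mollified variables; this is precisely what the paper's choice $h_\delta=\chi_2\,(1-\delta^2\Delta_{x,v})^{-1}\chi_1\,h$ encodes, since $[1-\delta^2\Delta_{x,v},P_0]=2\,\delta^2\sum_j\partial_{v_j}\partial_{x_j}$ (see \eqref{com2}) and the explicit factor $\delta^2$ pays for the two derivatives via \eqref{eq:remutile}, making the error operators $B_0,B_1,B_2$ uniformly bounded; the sandwich between $\chi_1$ and $\chi_2$ also provides the compact support needed to justify your integrations by parts. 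With that replacement your plan matches the paper's proof. I would also drop, or not rely on, the parenthetical duality route: Theorem \ref{thm_transitoire} is an $L^2\to\mathcal H^2$ statement for smooth-coefficient operators, and its use ``at the level of $\mathcal H^{-1}$'' is not available in the paper and would require a separate proof.
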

\begin{proof}
Let $ h $ satisfy \eqref{prob1}. We can work near a point $ (x_0, v_0) \in \Bbb R^{2d} $ {\it i.e.} in the ball $B((x_0,v_0),r_0)$ for some $r_0>0$. We show that 
 $ \nabla_v \, h$ belongs to  $L^2 (B((x_0,v_0),r_0/2),\mathbb R^{d})$\,.\\

\textbf{Step 1. Regular solution with compact support.}\\
We begin by assuming that $ h \in H^{2} (\mathbb{R}^{2d}) $ is supported in  $ B ((x_0, v_0), r_0) $ with $ r_0> 0 $ and we will establish a priori estimates.   We multiply Equation \eqref{prob1} by $ h $ and integrate it with respect to $ x $ and $ v $, we obtain
\begin{align}\label{eg_principale}
\langle P_0 h,h\rangle &=\sum\limits_{j=1}^{d}\langle  c_j\,\partial_{v_j}\,h_j, h\rangle +\langle \tilde{g}, h\rangle\,,
\end{align}
where $ \langle. \,, \,. \rangle $ denotes the Hilbertian scalar product on the real Hilbert space $ L^{2} (\mathbb{R}^{2d}) $.
Let's start by calculating the left side of the previous equality
\begin{align*}
\langle P_0 h,h\rangle &=\langle (v\cdot \nabla_{x}-\Delta_v)\,h,h\rangle \\
&=\int_{\mathbb{R}^{2d}}\, \left(v\cdot \nabla_x \,h \right)\, h\,dx\,dv-\int_{\mathbb{R}^{2d}}\,\left(\Delta_v\,h\right)\, h\,dx\,dv\,.
\end{align*}
Then performing an integrations by parts with respect to $ x $ and $ v $ in the previous equality, we get
\begin{align*}
  \langle P_0 h,h\rangle  &=\Vert \nabla_v \,h\Vert^{2}_{L^{2}(\mathbb{R}^{2d}, \Bbb R^{d})}\,,
\end{align*}
we used that the operator $ v \cdot \nabla_x $ is formally skew-adjoint in $ L^2 (\Bbb R^{2d}) $.

We now estimate each term in the right hand side of the equality \eqref{eg_principale}.  Using  Assumption  \eqref{cond1} and performing integration by parts with respect to $ v_j $, we have
\begin{align*}
\sum\limits_{j=1}^{d}\langle  c_j\,\partial_{v_j}\,h_j, h\rangle=-\sum\limits_{j=1}^{d}\langle  c_j \,h_j, \partial_{v_j}\,h\rangle\,.
\end{align*}
Then applying the Cauchy-Schwarz inequality to the scalar product, we get
\begin{align*}
\left\vert \sum\limits_{j=1}^{d}\langle  c_j (x,v)\,\partial_{v_j}\,h_j, h\rangle \right\vert \leq \sum\limits_{j=1}^{d} \Vert c_{j}\,h_j\Vert_{L^{2}(\mathbb{R}^{2d})}\,\Vert \partial_{v_j}\,h\Vert_{L^{2}(\mathbb{R}^{2d})}\,.
\end{align*}
Then we obtain that for all $ \eta> 0 $, there exists $ C_{\eta}> 0 $ such that
\begin{align*}
\left\vert \sum\limits_{j=1}^{d}\langle  c_j (x,v)\,\partial_{v_j}\,h_j, h\rangle \right\vert \leq \eta
\, \Vert \nabla_{v} h\Vert^{2}_{L^{2}(\mathbb{R}^{2d},\Bbb R^{d})} + C_{\eta}\sum\limits_{j=1}^{d}\, \Vert c_j\,h_j\Vert^{2}_{L^{2}(\mathbb{R}^{2d})}\,.
\end{align*}
Similarly, we get
\begin{align*}
\vert \langle \tilde{g}, h \rangle \vert \leq \frac{1}{2}\Vert \tilde{g}\Vert_{L^{2}(\mathbb{R}^{2d})}^2 +\frac{1}{2}\Vert h\Vert^{2}_{L^{2}(\mathbb{R}^{2d})}\,.
\end{align*}
By choosing $ \eta <1 $, we obtain the existence of a constant $ C> 0 $ such that for any $h$ and $h_j$ satisfy the problem \eqref{prob1}
\begin{align}\label{estim1}
\Vert \nabla_{v}\,h\Vert_{L^{2}(\mathbb{R}^{2d}, \Bbb R^{d})}^2 \leq C\,\left(\sum_{j=1}^{d}\,\Vert c_j\,h_j\Vert^{2}_{L^{2}(\mathbb{R}^{2d})} + \Vert \tilde{g}\Vert_{L^{2}(\mathbb{R}^{2d})}^2 +\Vert h\Vert_{L^{2}(\mathbb{R}^{2d})}^2  \right)\,.
\end{align}

\textbf{ Step 2. General solution.}

 \item[ $\mathrm{I}$.] \textbf{General framework and useful remark.} \\
 To have the regularity stated in Theorem \ref{lem1} for a solution $ h \in L^{2}_{loc} (\mathbb{R}^{2d}) $ of  \eqref{prob1}, 
  we introduce two cut-off functions $ \chi_1 $ and $ \chi_2 $ such that
\begin{equation*}\left\{
\begin{array}{l}
 \chi_1, \chi_2 \in C^{\infty}_0 (\mathbb{R}^{2d})\,,\quad\mathrm{Supp}\,\chi_1 \subset B((x_0,v_0),r_0)\,\\
 \chi_1=1  \text{ in } B((x_0,v_0),\frac{r_0}{2}) \text{ and } \chi_1=0 \text{ in } \mathbb{R}^{2d}\setminus B((x_0,v_0),r_0)\\
 \chi_1\,\chi_2=\chi_1\,.
 \end{array}\right.
 \end{equation*}
 Where $ r_0> 0 $ is fixed at the beginning of the proof.
 
 Let $ h_\delta $ be the function defined on $\mathbb R^{2d}$ by 
 \begin{align}
 h_\delta (x,v)=\chi_2\, (1-\delta^2\,\Delta_{x,v})^{-1}\, \chi_1\,h \,,\, \text{ for } \delta >0\,,
\end{align}
where the operator $ (1- \delta^2 \, \Delta_{x, v})^{- 1} $ is defined via the Fourier transform.

Note that $ h_\delta \in H^{2} (\mathbb{R}^{2d}) $ is supported in $ B ((x_0, v_0), r_0) $. Using the dominated convergence theorem one can show
\begin{equation}\label{eq:conv}
 h_\delta \to \chi_1\,h \mbox{ in } L^{2}(\mathbb{R}^{2d}) \mbox{ when } \delta \to 0\,. 
 \end{equation}
 We will now show that there is a constant $ C $ and $ \delta_0 $ such that for all $ \delta \in (0,\delta_0]$
 \begin{equation}\label{eq:bnd}
 \Vert \nabla_v\, h_\delta\Vert_{L^2(\Bbb R^{2d},\Bbb R^{d})} \leq C\,.
 \end{equation}
  Before giving the proof, we make the following simple  remark: 
\begin{remarque}  
   If $ Q_1 $ and $ Q_2 $ are two differential operators of order $ k_1 $ and $ k_2 $ respectively with coefficients in $ C ^{\infty}_0 $ such that $$ k_1+k_2\leq k\leq 2\,, $$ 
 then there exists a constant $C>0$ such that for all $\delta >0$
\begin{equation}\label{eq:remutile}
 \Vert Q_1\,(1-\delta^2\,\Delta_{x,v})^{-1} \, Q_2\,u\Vert_{L^{2}(\mathbb{R}^{2d})}\leq C\,\delta^{-k}\Vert u\Vert_{L^{2}(\mathbb{R}^{2d})}, \, \forall u\in L^{2}(\mathbb{R}^{2d})\,.
 \end{equation}
 \end{remarque}
 \item[ $\mathrm{II}$.] \textbf{Reformulation and relations of commutators.}\\
 Let's go back to the proof of inequality \eqref{eq:bnd}. We recall that $ h $ verifies 
 \begin{align}\label{eg1}
P_0 \,h=\sum\limits_{j=1}^{d} \,c_{j}(x,v)\, \partial_{v_j}\,h_j+\tilde{g}\,.
\end{align}
The goal is to follow the approach of the first step for $ h_\delta $ with however some differences. From \eqref{eg1}, let's look for the equation verified by $ h_\delta $. We have
\begin{align}\label{eg0}
P_0\,h_\delta =\chi_2\,(1-\delta^2\Delta_{x,v})^{-1}\,\chi_1\,P_0\,h +[\chi_2\,(1-\delta^2\Delta_{x,v})^{-1}\,\chi_1, P_0]\,h.
\end{align}
To control the commutator in the right-hand side, we will use the following relations of commutators:
\begin{align}
& [\varphi, P_0]=-\Delta_{v}\,\varphi +v\cdot\nabla_x \,\varphi  -2\nabla_{v}\varphi\cdot \nabla_v\,,\forall \varphi\in C^{\infty}_0(\mathbb{R}^{2d})\,, \label{com1}\\
&[\varphi, \partial_{v_j}]=-\partial_{v_j}\,\varphi\,, \,\,\forall \varphi \in C_{0}^{\infty}(\mathbb{R}^{2d})\,,\forall j=1,..,d\,,\label{com0}\\
 &[(1-\delta^2\,\Delta_{x,v}), P_0]=\delta^2\,[\Delta_{x,v}, v\cdot \nabla_{x}]=2\,\delta^2\,\sum_{j=1}^{d}\,\partial_{v_j}\partial_{x_j}\,,\label{com2}\\
  &[(1-\delta^2\,\Delta_{x,v})^{-1}, P_0]=-(1-\delta^2\,\Delta_{x,v})^{-1} \,[(1-\delta^2\,\Delta_{x,v}), P_0]\,(1-\delta^2\,\Delta_{x,v})^{-1}.\label{com3}
\end{align}
Using the relations of the previous commutators and equalities \eqref{eg1} and \eqref{eg0}, we rewrite $ P_0 \, h_\delta $ in the following form:
\begin{align}\label{eg2}
P_0 \,h_\delta &=\sum_{j=1}^{d}\,\chi_2\,(1-\delta^2\,\Delta_{x,v})^{-1}\,\chi_1\, c_j\, \partial_{v_j}\,h_j+B_0\,\tilde{g}\notag\\
&+B_1\,h+2\,\chi_2\,(1-\delta^2\,\Delta_{x,v})^{-1}\,\nabla_v\,\chi_1\cdot\nabla_v\,h \\
&+B_2\,h+2\,\nabla_v\chi_2\cdot \nabla_v\,\left( (1-\delta^2\,\Delta_{x,v})^{-1} \,\chi_1\,h \right)\notag\,.
\end{align}
Here the $ B_j $ for $ j = 0,1,2 $ are the uniformly bounded operators with respect to $ \delta $ on $ L^2 (\Bbb R^{2d}) $ (according to \eqref{eq:remutile})\,.
\begin{align}
 B_0&=\chi_2\,(1-\delta^2\,\Delta_{x,v})^{-1}\,\chi_1\,,\label{b0}\\
 B_1&=\chi_2\,(1-\delta^2\,\Delta_{x,v})^{-1}\,(\Delta_{x,v}\chi_1)+\chi_2\,(1-\delta^2\,\Delta_{x,v})^{-1}\,(v\cdot\nabla_x\chi_1)\,,\label{b1}\\
 B_2&=\chi_2\,(1-\delta^2\,\Delta_{x,v})^{-1}\,\delta^2\,R\,(1-\delta^2\,\Delta_{x,v})^{-1}\,\chi_1\,,\label{b2}
  \end{align}
  where $R$ is the second-order operator
  \begin{align}\label{R}
R=[\Delta_{x,v}, v\cdot\nabla_x]=2\sum_{j=1}^{d}\,\partial_{v_j}\partial_{x_j}\,.
\end{align}
\item[ $\mathrm{III}$.] \textbf{Proof of the uniform boundedness of $ \Vert \nabla_v \, h_\delta \Vert_{L^2(\Bbb R^{2d})} $.}\\
In order to show  \eqref{eq:bnd}, we multiply  equation \eqref{eg2} by $ h_\delta $ and integrate, we obtain
 \begin{align}\label{eg2:bis}
\langle P_0 \,h_\delta, h_\delta \rangle&=\sum_{j=1}^{d}\,\langle\chi_2\,(1-\delta^2\,\Delta_{x,v})^{-1}\,\chi_1\, c_j\, \partial_{v_j}\,h_j, h_\delta\rangle+\langle B_0\,\tilde{g}, h_\delta\rangle\notag\\
&+\langle B_1\,h, h_\delta\rangle+\langle 2\,\chi_2\,(1-\delta^2\,\Delta_{x,v})^{-1}\,\nabla_v\,\chi_1\cdot\nabla_v\,h, h_\delta\rangle \\
&+ \langle B_2\,h, h_\delta\rangle+\langle 2\,\nabla_v\chi_2\cdot \nabla_v\,\left( (1-\delta^2\,\Delta_{x,v})^{-1} \,\chi_1\,h \right), h_\delta\rangle\notag\,,
\end{align}
and we estimate term by term of the right-hand side of the previous equality.

For the first term, we use relation \eqref{com0} to switch $ \partial_{v_j} $ and $  \chi_k $ for $ k = 1,2 $, and obtain:
\begin{align*}
&\langle \chi_2\,(1-\delta^2\,\Delta_{x,v})^{-1}\,\chi_1\, c_j\, \partial_{v_j}\,h_j, h_\delta\rangle=I_0+I_1+I_2\,,
\end{align*}
where the $ I_j $ for $j=0,1,2$ are defined by
\begin{align}
I_0&=\langle \partial_{v_j}\left( \chi_2\,(1-\delta^2\,\Delta_{x,v})^{-1}\,\chi_1\,c_j\,h_j \right), h_\delta\rangle\,,\\
I_1&=\langle  \chi_2\,(1-\delta^2\,\Delta_{x,v})^{-1}\,(\partial_{v_j}\chi_1)\,c_j\,h_j, h_\delta\rangle\,,\\
I_2&=\langle (\partial_{v_j}\,\chi_2)\,(1-\delta^2\,\Delta_{x,v})^{-1}\,\chi_1 \,c_j\,h_j, h_\delta\rangle\,.
\end{align}
To estimate $ I_0 $, we integrate by parts with respect to $ v_j $
\begin{align*}
I_0 =&-\langle \chi_2\,(1-\delta^2\,\Delta_{x,v})^{-1}\,\chi_1\,c_j\,h_j ,\partial_{v_j}\, h_\delta\rangle\,,
\end{align*}
and then,  applying the Cauchy-Schwarz inequality, we get
\begin{align*}
\vert I_0\vert&\leq \Vert B_0\,(c_j\,h_j)\Vert_{L^{2}(\mathbb{R}^{2d})}\,\Vert \partial_{v_j}\,h_\delta\Vert_{L^{2}(\mathbb{R}^{2d})}\,.
\end{align*}
Then  using the uniform boundedness of  $ B_0 $ we obtain that for all $ \eta'> 0 $, there exists $ C_{\eta'}> 0 $ such that
\begin{align}\label{est:I0}
\vert I_0\vert \leq C_{\eta'}\,\Vert c_j\,h_j\Vert_{L^2(B((x_0,v_0),r_0))}^{2} +\eta'\,\Vert \partial_{v_j}\,h_\delta \Vert_{L^2(\Bbb R^{2d})}^2\,.
\end{align}
The two terms $ \vert I_1 \vert $ and $ \vert I_2 \vert $ satisfy the following estimates:
\begin{align}
\vert I_1\vert &\leq \Vert \chi_2\,(1-\delta^2\,\Delta_{x,v})^{-1}\,(\partial_{v_j}\chi_1)\,c_j\,h_j\Vert_{L^{2}(\mathbb{R}^{2d})}\,\Vert h_\delta\Vert_{L^{2}(\mathbb{R}^{2d})}\notag\\&\leq C\,\Vert c_j\,h_j\Vert_{L^2(B((x_0,v_0),r_0))}\,\Vert h_\delta \Vert_{L^{2}(\mathbb{R}^{2d})}\label{est:I1}\\
\vert I_2\vert &\leq \Vert(\partial_{v_j}\,\chi_2)\,(1-\delta^2\,\Delta_{x,v})^{-1}\,\chi_1 \,c_j\,h_j\Vert_{L^{2}(\mathbb{R}^{2d})}\,\Vert h_\delta\Vert_{L^{2}(\mathbb{R}^{2d})}\notag\\
&\leq C\,\Vert c_j\,h_j \Vert_{L^2(B((x_0,v_0),r_0))}\,\Vert h_\delta \Vert_{L^{2}(\mathbb{R}^{2d})}\,.\label{est:I2}
\end{align}
Using \eqref{est:I0}-\eqref{est:I2}, we get that for all $ \eta'> 0 $, there exists $ C_{\eta'}> 0 $ such that
\begin{align}\label{ine1}
&\left\vert  \sum_{j=1}^{d}\, \langle \chi_2\,(1-\delta^2\,\Delta_{x,v})^{-1}\,\chi_1\, c_j\, \partial_{v_j}\,h_j, h_\delta\rangle\right\vert \\&\leq C_{\eta'}\,\sum_{j=1}^{d} \,\Vert c_j\,h_j\Vert_{L^2(B((x_0,v_0),r_0))}^2+\eta'\,\Vert \nabla_{v}\,h_\delta\Vert_{L^{2}(\mathbb{R}^{2d},\Bbb R^{2d})}^2
+C_{\eta'}\,\Vert h_\delta \Vert_{L^{2}(\mathbb{R}^{2d})}^{2}\,,\notag
\end{align}
We show for the fourth and the sixth term of \eqref{eg2:bis} that for all $ \eta'' $ and $ \eta'''> 0 $, there exists $ C_{\eta''} $ and $ C_{\eta'''}> 0 $ such that
\begin{align}
\langle 2\,\chi_2\,(1-\delta^2\,\Delta_{x,v})^{-1}\,\nabla_v\,\chi_1\cdot\nabla_v\,h , h_\delta\rangle &\leq C_{\eta''}\Vert h\Vert_{L^2(B((x_0,v_0),r_0))}^2+\eta''\,\Vert \nabla_{v}\,h_\delta\Vert_{L^{2}(\mathbb{R}^{2d})}^2 \label{ine5}
\\
&\qquad +C_{\eta''}\,\Vert h_\delta\Vert_{L^{2}(\mathbb{R}^{2d})}^2 \notag \\
\langle 2\,\nabla_v\chi_2\cdot \nabla_v\,\left( (1-\delta^2\,\Delta_{x,v})^{-1} \,\chi_1\,h \right) , h_\delta\rangle &\leq C_{\eta'''}\Vert h\Vert_{L^2(B((x_0,v_0),r_0))}^2+\eta'''\,\Vert \nabla_{v}\,h_\delta\Vert_{L^{2}(\mathbb{R}^{2d},\Bbb R^{2d})}^2 \label{ine4}\\
&\qquad +C_{\eta'''}\,\Vert h_\delta\Vert_{L^{2}(\mathbb{R}^{2d})}^2\,.\notag
\end{align}

It remains to estimate the three remaining terms corresponding to the following scalar products:
\begin{align*}
\vert \langle B_0 \,\tilde{g}, h_\delta\rangle\vert  &\leq \Vert B_0\,\tilde{g}\Vert_{L^{2}(\mathbb{R}^{2d})}\,\Vert h_\delta\Vert_{L^{2}(\mathbb{R}^{2d})}\,,\\
\vert \langle B_1\,h, h_\delta \rangle \vert &\leq \Vert B_1\,h\Vert_{L^{2}(\mathbb{R}^{2d})}\,\Vert h_\delta\Vert_{L^{2}(\mathbb{R}^{2d})}\,,\\
\vert \langle B_2\,h, h_\delta \rangle \vert &\leq \Vert B_2\,h\Vert_{L^{2}(\mathbb{R}^{2d})}\,\Vert h_\delta\Vert_{L^{2}(\mathbb{R}^{2d})}\,.
\end{align*}
 Applying \eqref{eq:remutile}, we obtain the existence of constants $ C_{1}> 0 $ and $ C_2> 0 $ such that
\begin{align}
&\langle B_0 \,\tilde{g}, h_\delta\rangle \leq C_1\,\Vert \tilde{g}\Vert_{L^2(B((x_0,v_0),r_0))}\,\Vert h_\delta\Vert_{L^{2}(\mathbb{R}^{2d})}\,,\label{ine2bis}\\
&\langle B_1\,h, h_\delta \rangle \leq C_2\, \Vert h\Vert_{L^2(B((x_0,v_0),r_0))}\,\Vert h_\delta\Vert_{L^{2}(\mathbb{R}^{2d})}\,.\label{ine2}
\end{align}
To estimate the norm $\Vert B_2\,h\Vert_{L^{2}(\mathbb{R}^{2d})}$,   we apply \eqref{eq:remutile}  (with $Q_1=(1-\delta^2\,\Delta_{x,v})^{-1}$ and $Q_2=\delta^2 \,R$ where $R$ is defined in \eqref{R}).
We also obtain the existence of $ C_3 $ and $ \delta_0> 0 $ such that for all $ \delta \in (0, \delta_0] $
\begin{align}
\vert \langle B_2\,h, h_\delta \rangle \vert\leq C_3\, \Vert h\Vert_{L^2(B((x_0,v_0),r_0))}\,\Vert h_\delta\Vert_{L^{2}(\mathbb{R}^{2d})}\,.\label{ine3}
\end{align}
Finally, using the estimates \eqref{ine1}-\eqref{ine3}, and choosing $ \eta', \eta''$ and $ \eta'''> 0 $ such that $ \eta'+  \eta'' + \eta'''<1 $, we get the existence of a constant $ \tilde{C} $ and $ \delta_0> 0 $ such that $ \forall \delta \in (0, \delta_0] $
\begin{align}\label{ine4}
\Vert \nabla_{v}\,h_\delta\Vert_{L^{2}(\mathbb{R}^{2d},\Bbb R^{d})}^2 \leq \tilde{C}\,\left(\sum_{j=1}^{d}\,\Vert c_j\,h_j\Vert^{2}_{L^2(B((x_0,v_0),r_0))} + \Vert \tilde{g}\Vert_{L^2(B((x_0,v_0),r_0))}^2 +\Vert h\Vert_{L^2(B((x_0,v_0),r_0))}^2  \right)\,.
\end{align}
This completes the proof of \eqref{eq:bnd} \,.

\item[ $\mathrm{IV}$.] \textbf{Deduce the uniform local boundedness of $ \nabla_v \,h$.} \\
Let $ h \in L^{2} (\mathbb {R}^{2d}) $ satisfy \eqref{prob1}. On the one hand, according to \eqref{eq:conv} we know that
 $$ h_\delta \to \chi_1\,h \text{ in } L^2(\Bbb R^{2d}) \text{ when } \delta\to 0\,. $$
 This implies that
 $$ h_\delta \to \chi_1\,h \text{ and }  \nabla_v \,h_\delta \to \nabla_v (\chi_1\,h)  \text{ in } \mathcal{D}' (\Bbb R^{2d}) \text{ when } \delta \to 0 \,. $$
 On the other hand, according to inequality \eqref{ine4}, we obtain that $ (\nabla_v \, h_\delta)_{\delta} $ is bounded in $ L^2 (\mathbb{R}^{2d}) $ for $ \delta \in (0, \delta_0] $. By weak compacity, there is a subsequence $ (\delta_k)_{k \in \Bbb N} $ tending to $ 0 $ and a function $ u \in L^2 (\mathbb{R}^{2d}, \Bbb R^{d}) $ such that
 $$\nabla_v \,h_{\delta_k}\to u\, \text{ in } \,\mathcal{D}'(\Bbb R^{2d}) \, \text{ when } k\to+\infty\,.  $$
 Hence $ \nabla_v (\chi_1 \, h) = u $ in $ \mathcal{D}'(\Bbb R^{2d}) $. Then $ \nabla_v \, h \in L^2 (B((x_0,v_0),r_0/2), \Bbb R^{d}) $ \,.

\textbf{Step 3. Conclusion.}\\
Taking the information obtained near each point $ (x_0, v_0) $, we deduce that $ \nabla_v \, h \in L^{2}_{loc} (\Bbb R^{2d}, \Bbb R^{d}) $, which finishes the proof of Proposition \ref{lem1} \,.
\end{proof}
We now give the proof of Theorem \ref{thm_faible}.
\begin{proof}[Proof of Theorem \ref{thm_faible}]
 The key idea of the proof is to decompose the operator $K^*$ defined in  \eqref{def_K*} as follows:
\begin{align}\label{K_decom}
 K^*=P_0+(v\wedge B_e)\cdot\nabla_v+v^2/4-d/2\,.
\end{align}
Let $f\in L^{2}_{loc}(\mathbb{R}^{2d})$ and $B_e\in L^{\infty}(\mathbb{R}^{d},\mathbb{R}^{d(d-1)/2})$ such that
 $$ K^*\,f=g \text{ with }  g\in  L^{2}_{loc}(\mathbb{R}^{2d})\,,$$
 the goal is to show that $ f\in \mathcal{H}^{2}_{loc}(\mathbb{R}^{2d})$.
 
 \textbf{Step 1. Reformulation of the problem.}\\
 By following decomposition given in \eqref{K_decom} of the operator $ K^*$, we can consider our problem as a special case of the following generalized problem:
 \begin{align}\label{prob_refo}
\begin{cases}
P_0\,f= \sum\limits_{j=1}^{d} c_{j}(x,v)\,\partial_{v_j}\,h_j+\tilde{g}\\
h_j=f \in L^{2}_{loc}(\mathbb{R}^{2d})\,,\\
\tilde{g}=g-\frac{v^2}{4}\,f+\frac{d}{2}\,f\in L^{2}_{loc}(\mathbb{R}^{2d}),\,\forall j=1,...,d \,,
\end{cases}
\end{align}
 where $P_0$ the Kolomogrov operator defined in \eqref{def:P0} and
\begin{align*}
 c_j(x,v)=-(v\wedge\,B_e)_{j} \in L^{\infty}(\mathbb{R}^{d},L^{2}_{loc}(\mathbb{R}^{d}))\,,\forall j=1,...,d\,,
\end{align*}
and where we denote by $ (L)_{j} $ the jth component of the vector $ L \in \mathbb{R}^{d} $ \,. \\ Note that the coefficients $ c_j $ verify the condition \eqref{cond1} of Proposition \ref{lem1} because
$$ \partial_{v_j}(v\wedge B_e)_j=0\,,\forall j=1,..,d\,. $$
then Proposition \ref{lem1} gives $ \nabla_v \, f \in L^{2}_{loc} ( \mathbb{R}^{2d}, \Bbb R^{d}) $ \,.

\textbf{Step 2. Application to Theorem \ref{thm_transitoire}.} \\
Our operator $ P_0 $ can be written as follows:
\begin{align}\label{ecriture_2}
 P_0= -\left(\sum_{j=1}^{d} \,X_j^2 +X_0\right)\,,
 \end{align}
 where $ X_j $ for $ j = 0,1, .., d $ are defined by 
 \begin{align*}
X_j=\begin{cases}
\partial_{v_j}\, &\text{  if } \,j\neq 0 \,,\\
v\cdot \nabla_x\, &\text{  if  }\, j=0\,.
\end{cases}
\end{align*}
According to step $ 1 $, Problem \eqref{prob_refo} is a special case of the problem of regularity of type $ P_0 \, f = \tilde{h} $, with $ f $ and $ g \in L^{2}_{loc} (\mathbb{R}^{2d}) $ and $ \tilde{h} $ is given by
$$\tilde{h}=\sum\limits_{j=1}^{d} c_{j}(x,v)\,\partial_{v_j}\,f +g-\frac{v^2}{4}\,f+\frac{d}{2}\,f \,.$$
According to step 1, we have shown that $ \nabla_v \, f \in L^{2}_{loc} (\mathbb{R}^{2d},\Bbb R^{d}) $, which implies that $ \tilde{h} \in L^{2}_{loc} (\mathbb{R}^{2d}) $. Hence, by applying Theorem \ref{thm_transitoire},  we obtain $ f \in \mathcal{H}^{2}_{loc} (\mathbb{R}^{2d}) $, which completes the proof of Theorem \ref{thm_faible}\,.\\
\end{proof}
 \begin{remarque}\label{remark:utile}
In this part, we have shown a local Sobolev regularity on $\Bbb R^{2d}$, we  actually need it only in $\Omega \times \Bbb R^d$ where $\Omega$ is one of the open set of $\Bbb R^d$
 appearing when choosing local coordinates  for $\mathbb T^d$.
 \end{remarque}

\subsection{Proof of Theorem \ref{prop 2}}
Finally, we are ready to give the proof of Theorem~\ref{prop 2}. The accretivity of the operator $ K $ is clear.
To show that the operator is maximal, it suffices to show that there exists $ \lambda_{0}> 0 $ such that the operator $ T = K + \lambda_{0} \, Id $ is of dense image in $ L^2 (\mathbb{T}^{d}\times\mathbb{R}^{d}) $. As in \cite{nier2005hypoelliptic}, we take $  \lambda_{0} = \frac{d}{2}+1 \,$. \\
Let $f \in L^2(\mathbb{T}^{d}\times\mathbb{R}^{d})$ such that
\begin{align}
\langle u,(K + \lambda_{0} Id)\,w \rangle =0, \quad \forall w \in D(K)\,,
\label{6-z}
\end{align}
we have to show that $ u = 0 $.

For this we observe that equation \eqref{6-z} implies that 
$$  \left(K^*+\frac{d}{2} +1\right) u =0, \, \text{ in }\, \mathcal D'(\mathbb{T}^{d}\times\mathbb{R}^{d})\Longleftrightarrow K^*\,u=-(\frac{d}{2}+1)u\,,\, \text{ in }\, \mathcal D'(\mathbb{T}^{d}\times\mathbb{R}^{d})\,,$$
where $K^*$ is the operator defined in \eqref{def_K*}.

 Under the assumption that $ B_e\in L^\infty (\mathbb{T}^{d},\mathbb{R}^{d(d-1)/2}) $ and $ u \in D (K) \subset  L^2_{loc} (\mathbb{T}^{d}\times\mathbb{R}^{d}) $, Theorem \ref{thm_faible} and Remark \ref{remark:utile} show that $ f \in \mathcal{H}^2_{loc}(\mathbb{T}^{d}\times\mathbb{R}^{d})$. More precisely, using the compactness of $\mathbb T^d$, we have $\chi (v) f \in \mathcal{H}^2(\mathbb{T}^{d}\times\mathbb{R}^{d})$ for any $\chi\in C_0^\infty(\mathbb R^d)$. 
The rest of the proof is standard. The regularity obtained for $ f $ allows us to justify the integrations by parts and the cut-off argument given in \cite[Proposition 5.5]{nier2005hypoelliptic}. We note that in \cite{nier2005hypoelliptic}, a cut-off in $x$ and $v$ was necessary to develop the argument whereas here it suffices to perform a cut-off  in $v$. Here we refer to \cite[Proposition 3.1]{ZK}.

\section*{Acknowledgments} The author is grateful to Bernard Helffer for his important continued help and advice throughout the creation of this work  and  to Joe Viola for his useful remarks concerning both mathematical and grammatical questions of this paper. The author thanks also the Centre Henri Lebesgue ANR-11-LABX-0020-01 and the Faculty of Sciences (Section I) of the Lebanese University for their support.

% \addcontentsline{toc}{section}{Bibliographie} 

%\addcontentsline{toc}{section}{Bibliographie} 

\end{document}